\newcommand{\sent}{\leftrightarrow}
\newcommand{\norm}[1]{\left\|#1\right\|}
\newcommand{\N}{\mathbb{N}}
\newcommand{\C}{\mathbb{C}}
\newcommand{\Z}{\mathbb{Z}}
\newcommand{\A}{\mathcal{A}}
\newcommand{\G}{\mathcal{G}}
\newcommand{\T}{\mathcal{T}}
\newcommand{\Path}{\mathcal{P}}
\newcommand{\Fock}{\mathcal{F}}
\newcommand{\Up}{\varepsilon^{\uparrow}}
\newcommand{\Down}{\varepsilon^{\downarrow}}
\newcommand{\NC}{\operatorname{NC}}
\newcommand{\homo}{\operatorname{hom}}
\newcommand{\Homo}{\operatorname{Hom}}
\numberwithin{equation}{section}
\title[Khintchine inequalities, trace monoids and Tur\'an-type problems]{Khintchine inequalities, trace monoids and Tur\'an-type problems}
\author[P.\ O.\ Santos \and R. Tripathi and P.\ Youssef]{%
        Patrick Oliveira Santos \and 
        Raghavendra Tripathi \and
        Pierre Youssef
        }
\address{Patrick Oliveira Santos. Division of Science, NYU Abu Dhabi, Abu Dhabi, UAE.}
\email{po2150@nyu.edu}
\address{Raghavendra Tripathi. Division of Science, NYU Abu Dhabi, Abu Dhabi, UAE.}
\email{r.tripathi@nyu.edu}
\address{Pierre Youssef. Division of Science, NYU Abu Dhabi, Abu Dhabi, UAE \& Courant Institute of Mathematical Sciences, New York University, New York, USA.}
\email{yp27@nyu.edu}
\date{\today}
\begin{document}
\newtheorem{theorem}{Theorem}[section]
\newtheorem{question}{Question}[section]
\theoremstyle{plain}

\newtheorem{corollary}[theorem]{Corollary}
\newtheorem{lemma}[theorem]{Lemma}
\newtheorem{conjecture}[theorem]{Conjecture}
\newtheorem{proposition}[theorem]{Proposition}

\theoremstyle{definition}
\newtheorem{example}[theorem]{Example}
\newtheorem{definition}[theorem]{Definition}
\newtheorem{remark}[theorem]{Remark}

\begin{abstract}
   We prove scalar and operator-valued Khintchine inequalities for mixtures of free and tensor-independent semicircle variables, interpolating between classical and free Khintchine-type inequalities. Specifically, we characterize the norm of sums of $G$-independent semicircle variables in terms of the spectral radius of the Cayley graph associated with the trace monoid determined by the graph $G$. Our approach relies on a precise correspondence between closed paths in trace monoids and the norms of such operator sums. This correspondence uncovers connections between non-commutative probability, combinatorial group theory, and extremal graph theory. In particular, we formulate Turán-type extremal problems that govern maximal norm growth under classical commutation constraints, and identify the extremal configurations. We hope that the methods and connections developed here will be useful in the study of non-commutative structures constrained by combinatorial symmetries. 
\end{abstract}
\maketitle

\section{Introduction}
\label{sec:intro}
The classical Khintchine's inequality \cite{Khintehine1923, Paley1930} provides a fundamental comparison between the $L_p$ and $L_2$ norms of sums of independent Rademacher variables. Given a sequence of scalars $(\alpha_i)_{i=1}^L$ and a sequence of independent identically distributed (i.i.d) Rademacher variables $(\epsilon_i)_{i=1}^L$, the inequality asserts that 
\begin{equation}
\label{eqn: ClassicalKhintchine}
    A_p\left(\sum_{i=1}^{L}\vert\alpha_i\vert^2\right)^{\frac12} \leq \norm{\sum_{i=1}^{L}\alpha_i\epsilon_i}_{2p}\leq B_p \left(\sum_{i=1}^{L}\vert\alpha_i\vert^2\right)^{\frac12} \;\;,
\end{equation}
for some numerical constants $A_p, B_p$. The optimal constants were completely determined by Haagerup~\cite{Haagerup1981} who showed, in particular, that $B_p\sim \sqrt{p}$ as $p\to \infty$. 
Khintchine's original work laid the foundation for a long line of research connecting probability, functional analysis, and harmonic analysis \cite{Hoffmann,lust1991non, Lindenstrauss68, Pisier12Grothendieck, Marcus81random}. Khintchine-type inequalities, beyond providing norm comparison, reflect deep geometric properties. For instance, \eqref{eqn: ClassicalKhintchine} implies that the linear subspace $\mathcal{R}_p$ generated by Rademacher sums embeds into $\ell_2$, illustrating that the Rademacher system behaves ``almost orthonormally'' in $L_{2p}$. This embedding plays a central role in the study of types and cotypes of Banach spaces~\cite{Pisier86Probabilistic, Diestel01Operator, Pisier03Non}, and has had major implications in the local theory of Banach spaces \cite{Ledoux11}. 

Over the decades, Khintchine-type inequalities have been extended in multiple directions. One classical extension replaces Rademacher variables with other independent random variables, such as standard Gaussians, or more general sub-Gaussian or log-concave distributions \cite[Section 2.6]{Vershynin2018}, \cite{milman1986asymptotic}. Another direction enriches the scalar setting by considering vector-valued coefficients. A more profound and structurally rich generalization of Khintchine's inequality arises in the non-commutative setting, where the classical scalar-valued theory is extended to the operator algebra setting. In this framework, noncommutativity can be introduced in two fundamentally different ways: either on the coefficients or on the random variables themselves. The first direction, pioneered by Lust-Piquard and Pisier in a series of foundational works \cite{lust1986inegalites, lust1991non},  introduces noncommutativity through the operator-valued coefficients; see also \cite{Oliveira2010, Tropp2011, Rudelson1999}. One considers sums of the form $\sum_{i=1}^L \epsilon_i a_i$ where the $a_i$'s are bounded linear operators on a Hilbert space and estimates the corresponding $p$-Schatten norm. Remarkably,  the inequalities retain the same order of growth as in the scalar case, with constants behaving like $\sqrt{p}$ as $p\to \infty$ (more precisely, one obtains a bound of order $\min(\sqrt{p},\sqrt{L})$. The case $p=\infty$,  corresponding to the operator norm, is of particular interest and has received special attention due to its connections with random matrix theory \cite{Bandeira23Matrix, Tropp15}. 
The second generalization occurs when the randomness itself becomes non-commutative, as in the framework of free probability introduced by Voiculescu \cite{voiculescu1985symmetries}. Here, classical independence is replaced by freeness, a concept that captures the asymptotic behavior of large random matrices. In this setting, Haagerup and Pisier \cite{haagerup1993,parcet2005non} established a free analog of Khintchine's inequality by analyzing sums of the form $\sum_{i=1}^L a_i\otimes s_i$, where the $s_i$'s are free semicircle variables and the $a_i$'s are bounded linear operators on a Hilbert space. 
The behavior in the free setting is fundamentally different from the classical case. Indeed, when the operators $a_i$'s are replaced by scalars $\alpha_i$'s, the sum $\sum_{i=1}^L \alpha_i s_i$ is simply a semicircle variable with variance $\sum_{i=1}^L \alpha_i^2$. The $2p$-th moment of such variable is known to be $C_p (\sum_{i=1}^L \alpha_i^2)^p$, where $C_p=\frac{1}{2p+1}\binom{2p}{p}$ is the $p$-Catalan number. As $C_p^{\frac{1}{2p}}\to 2$ as $p\to\infty$, this shows that the constants in the corresponding Khintchine-type inequalities remain uniformly bounded in $p$, in sharp contrast to the classical case. This uniform behavior persists even in the operator-valued setting, where Haagerup and Pisier \cite{haagerup1993} proved
\begin{equation}
    \label{eqn: HaagerupPisier}
     \norm{\sum_{i=1}^{L}a_i\otimes s_{i}}\leq 2\max\left\{\norm{\sum_{i=1}^{L}a_ia_i^{*}}^{1/2}, \norm{\sum_{i=1}^{L}a_i^{*}a_i}^{1/2}\right\}.
\end{equation}
This is the free analog of the classical Khintchine inequality in the norm case ($p=\infty$). The same inequality holds when the semicircle variables are replaced by the left-regular representation of the free group~\cite{haagerup1993,parcet2005non}. 

While the classical and free Khintchine inequalities are structurally distinct, it is natural to check what intermediate behavior can emerge when classical and free independence coexist. The goal of this paper is to investigate such intermediate regimes, modeled by mixtures of classical and free independence, and study their impact on Khintchine-type inequalities. To formalize this setting, we use the notion of $G$-independence introduced by M{\l}otkowski \cite{mlotkowski2004lambdafree} and further developed in \cite{speicherjanusz2016mixture}. In this setup, a collection of variables is indexed by the vertex set of a graph $G$, where the edges of $G$ specify the classically independent pairs and the non-edges correspond to free (i.e., non-commuting) pairs. More formally, a collection of $G$-independent variables is constructed on a $C^{*}$- probability space $(A, \tau)$ equipped with a faithful trace $\tau$ and norm $\norm{\cdot}$. This notion, which interpolates between classical independence (when $G$ is a complete graph) and freeness (when $G$ is the edgeless graph), has attracted increasing interest in recent years, particularly due to its emergence in random matrix theory. Random matrix models that exhibit asymptotic $G$-independence have been constructed in~\cites{charlesworth2021matrix,charlesworth2024random}, with strong convergence results established in~\cite{magee2308strongly, collins2023spectrum}. 
We consider a collection of $G$-independent semicircle variables and aim at establishing a corresponding Khintchine-type inequality. We note that such a family appears naturally as the strong limit of a tensor GUE model \cite{chen2024new} (see also \cite{magee2308strongly,collins2024strong}) and, in some cases, characterizes the universal law governing central limit theorems for $\epsilon$-independence \cite{lancien2024centrallimittheoremtensor, cébron2024graphontheoreticapproachcentrallimit,cébron2025centrallimittheoremepsilonindependent}. For instance, the distribution of 
\begin{align*}
    T_{G}:=\frac{1}{\sqrt{L}}\sum_{i=1}^{L}s_{i},
\end{align*}
where $(s_{i})_{i=1}^{L}$ are $G$-independent semicircles, coincides with the graphon measure associated to $G$ and appears as such as the limiting distribution in some ``graphon'' central limit theorem (see \cite{cébron2024graphontheoreticapproachcentrallimit} for precise definitions). A Khintchine-type inequality would, in particular, provide information on the support of this limiting measure.  

 However, even in the seemingly simpler scalar case (with all coefficients equal to $1/\sqrt{L}$), the behavior of these mixtures remains highly nontrivial. To understand the influence of the graph structure on moment growth, we formulate the following extremal question: 
\begin{question}\label{q:turan}
    Given a fixed integer $\omega$, what is the maximum/minimum value of $\norm{T_G}$ among all graphs $G$ on $L$ vertices with clique number $\omega$?
\end{question}
This formulation is reminiscent of Turán-type extremal problems in graph theory \cite{mantel1907problem28,turan1941extremal} (see \cite{Zhao2023} for an introduction to the topic), where one seeks to optimize a graph parameter under constraints on the size of cliques. In our setting, bounding the clique number controls the extent of classical independence (or commutation), and the question becomes: how ``much'' classical independence can be allowed before the behavior deviates significantly from the free case? Moreover, a natural question is to identify which graphs achieve extremal values and, in particular, whether certain well-structured families are candidates for extremizers.

Our first result provides a sharp Khintchine-type estimate for (scalar) linear combinations of $G$-independent semicircles. Throughout, we assume that our variables are defined on a $C^*$-probability space $(\A, \tau)$ equipped with a faithful trace $\tau$. For an element $u\in \A$, we use $\norm{u}_{2p}^{2p}$ to denote $\tau\left({(uu^*)}^{p}\right)$.
\begin{theorem}[Scalar Khintchine inequality for mixtures]
\label{thm: ScalarCoefficientSharpUpperBound}
 Let $G$ be a graph on $[L]$ and let $(s_{i})_{i=1}^{L}$ be a collection of $G$-independent semicircles. For any collection $(\alpha_{i})_{i=1}^{L}$ of complex numbers and any $p\ge 1$, we have
    \begin{align*}
        \norm{\sum_{i=1}^{L}\alpha_is_{i}}_{2p}\leq C_p^{\frac{1}{2p}}\min\left\{\sum_{i=1}^{L}|\alpha_i|^2 |c^*(i)|,\;\; p\sum_{i=1}^{L}|\alpha_i|^2\right\}^{1/2},
    \end{align*}
    where $|c^*(i)|$ is the size of the largest clique in $G$ containing $i$. In particular, 
    \begin{align*}
        \norm{\sum_{i=1}^{L}\alpha_is_{i}}_{2p}\leq C_p^{\frac{1}{2p}}\sqrt{\omega(G)\wedge p}\,  \big(\sum_{i=1}^{L}|\alpha_i|^2\big)^{1/2},
    \end{align*}
    where $\omega(G)$ is the clique number of $G$.
\end{theorem}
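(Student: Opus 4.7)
The plan is to apply the moment method. Writing $u=\sum_i \alpha_i s_i$, expand
\begin{equation*}
\|u\|_{2p}^{2p} \;=\; \tau\!\left((uu^*)^{p}\right) \;=\; \sum_{i_1,\dots,i_{2p}} \alpha_{i_1}\overline{\alpha_{i_2}}\cdots \alpha_{i_{2p-1}}\overline{\alpha_{i_{2p}}}\,\tau(s_{i_1}\cdots s_{i_{2p}}).
\end{equation*}
The mixed moment $\tau(s_{i_1}\cdots s_{i_{2p}})$ for $G$-independent semicircles admits the standard combinatorial description as the number of pair partitions $\pi$ of $[2p]$ that respect the labeling (so $i_k=i_\ell$ whenever $\{k,\ell\}\in\pi$) and are \emph{$G$-non-crossing}, meaning any two crossing pairs of $\pi$ carry labels joined by an edge of $G$. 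This is the moment-side of the trace-monoid / closed-path correspondence emphasised in the paper. Substituting this and interchanging sums rewrites $\|u\|_{2p}^{2p}$ as a sum over pair partitions $\pi$ of $[2p]$ and labelings $j:\pi\to[L]$ meeting the $G$-non-crossing condition, each contributing (up to signs that the absolute value absorbs) $\prod_{B\in\pi}|\alpha_{j(B)}|^2$.

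For the $\sqrt p$-factor, I would discard the $G$-non-crossing constraint. Each of the $(2p-1)!!$ pair partitions then contributes at most $(\sum_i|\alpha_i|^2)^p$, giving
\begin{equation*}
\|u\|_{2p}^{2p}\;\le\;(2p-1)!!\Big(\sum_i|\alpha_i|^2\Big)^p \;\le\; C_p\, p^p\Big(\sum_i|\alpha_i|^2\Big)^p,
\end{equation*}
where I used the identity $(2p-1)!! = C_p(p+1)!/2^p$ and the elementary bound $(p+1)!/2^p \le p^p$ (easy induction). Taking $2p$-th roots yields the claimed $C_p^{1/(2p)}\sqrt{p\sum_i|\alpha_i|^2}$.

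For the $\sqrt{|c^*(i)|}$-factor, I would organise the count by non-crossing skeleton. The invariance of $\tau(s_{i_1}\cdots s_{i_{2p}})$ under commutations of adjacent letters in $G$ lets me group labeled sequences by commutation-class representatives, which can be canonically chosen to be non-crossing. For each $\tilde\pi\in\NC_2(2p)$ with labeling $j:\tilde\pi\to[L]$, let $N(\tilde\pi,j)$ be the number of $G$-non-crossing pair partitions of $[2p]$ reducing to $(\tilde\pi,j)$. I would prove the per-pair estimate $N(\tilde\pi,j)\le\prod_{B\in\tilde\pi}|c^*(j(B))|$, using that every permissible repositioning of letter $j(B)$ in the word must stay inside a clique of $G$ containing $j(B)$. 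Summing yields
\begin{equation*}
\|u\|_{2p}^{2p}\;\le\;\sum_{\tilde\pi\in\NC_2(2p)}\prod_{B\in\tilde\pi}\sum_{j}|\alpha_j|^2\,|c^*(j)|\;=\;C_p\Big(\sum_i|\alpha_i|^2\,|c^*(i)|\Big)^p,
\end{equation*}
and $2p$-th roots gives the second bound. Combining both via the minimum and using $|c^*(i)|\le\omega(G)$ yields the theorem.

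The main obstacle is exactly the per-pair estimate $N(\tilde\pi,j)\le\prod_B|c^*(j(B))|$ on sizes of commutation classes of labeled non-crossing words: this is where the trace monoid / Cayley graph machinery developed earlier in the paper is essential, as it precisely controls how a letter in a trace word can be repositioned by commutations through the local clique neighbourhoods of $G$. Once this bookkeeping is in place, the rest of the argument is a routine sum over non-crossing skeletons.
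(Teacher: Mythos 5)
Your overall architecture matches the paper's: expand $\norm{u}_{2p}^{2p}$ via $G$-non-crossing pair partitions (the paper's Lemmas 2.2--2.3), reduce to nonnegative real coefficients by the triangle inequality, and then organize the count by ``non-crossing skeleton'' --- in the paper's language, by the Dyck path $\varepsilon$ together with the up-step labels $i$ of the associated path in the Cayley graph of the trace monoid. Your $\sqrt{p}$-bound is obtained a bit differently (you drop the $G$-non-crossing constraint altogether and count all $(2p-1)!!$ pairings, whereas the paper bounds each $|\Path_{2p}^{(\varepsilon)}(G,i)|\le p!$ uniformly and then sums over the $C_p$ Dyck paths), but both computations are correct and yield the same conclusion.

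The genuine gap --- which you flag yourself --- is the per-skeleton estimate $N(\tilde\pi,j)\le\prod_{B}|c^*(j(B))|$. Your one-line justification (``every permissible repositioning of letter $j(B)$ in the word must stay inside a clique of $G$ containing $j(B)$'') does not actually establish the bound, and it is misleading about the combinatorics: the choices at different down steps are coupled, and if $B_1$ crosses both $B_2$ and $B_3$, the $G$-non-crossing condition forces $j(B_1)\sim j(B_2)$ and $j(B_1)\sim j(B_3)$ but does \emph{not} force $j(B_2)\sim j(B_3)$, so the letters ``near'' a given one need not form a clique. The paper's Proposition 4.1 proves the estimate by induction on $p$: one peels off the first returning block and observes that the letter removed at the first down step must belong to the \emph{first clique} $c_1$ in the Cartier--Foata normal form of the current word $w_k$ (Lemma 2.4). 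Since $c_1$ is a clique of $G$ containing each $i_{t_r}$, one has $|c_1|\le\min_r|c^*(i_{t_r})|$, and the recursion produces the telescoping $\sum_{r=1}^{|c_1|}1/|c^*(i_{t_r})|\le 1$ that closes the induction. Without invoking the Cartier--Foata decomposition and this inductive structure, the key estimate remains unproved in your sketch, even though the rest of the reduction is sound.
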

This interpolates between the free and classical Khintchine's inequality~\eqref{eqn: ClassicalKhintchine} (with semicircles). Indeed, if $G$ is an edgeless graph (free setting), we have $\omega(G)=1$, while taking $G$ to be a complete graph, we obtain 
\[\norm{\sum_{i=1}^{L}\alpha_is_{i}}_{2p}\leq C_p^{\frac{1}{2p}}\sqrt{L\wedge p}\left(\sum_{i=1}^{L}|\alpha_i|^2\right)^{1/2}.\]
The classical Khintchine's inequality is often stated for $L=\infty$ in which case we obtain~\eqref{eqn: ClassicalKhintchine} with the right dependence on $p$. It is well-known that $\norm{u}_{2p}\to \norm{u}$, when $p$ goes to infinity, for any $u\in (\A, \tau)$ (see~\cite[Proposition 3.17]{nica2006lectures}). Using this,  Theorem~\ref{thm: ScalarCoefficientSharpUpperBound} readily provides an answer to Question~\ref{q:turan} and uncovers the extremizer. 

\begin{corollary}[Graph maximizer of scalar Khintchine]\label{cor: turan}
    Given a fixed integer $\omega$ which divides $L$, and a graph $G$ on $L$ vertices with clique number $\omega$, we have 
    $$
    \norm{T_{G}}\leq \norm{T_{K}},
    $$
    where $K$ denotes the Turán graph, that is, the complete $\omega$-partite graph with equal parts of size $L/\omega$. Moreover, we have
    \begin{align*}
        \norm{T_K}=2\sqrt{\omega}.
    \end{align*}
\end{corollary}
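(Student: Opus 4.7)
The plan is a simple two-step argument: use Theorem~\ref{thm: ScalarCoefficientSharpUpperBound} to obtain a uniform upper bound in terms of $\omega$, then compute $\norm{T_K}$ directly by exploiting the structure of the Tur\'an graph to show that this bound is attained.

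For the upper bound, I apply Theorem~\ref{thm: ScalarCoefficientSharpUpperBound} with $\alpha_i = 1/\sqrt{L}$ for every $i$. Since any vertex $i$ of a graph $G$ with clique number $\omega$ satisfies $|c^*(i)|\le \omega$, the first term in the minimum yields
$$
\norm{T_G}_{2p} \;\le\; C_p^{1/(2p)}\left(\frac{1}{L}\sum_{i=1}^L |c^*(i)|\right)^{1/2} \;\le\; C_p^{1/(2p)}\sqrt{\omega}.
$$
Letting $p\to\infty$ and using that $\norm{u}_{2p}\to\norm{u}$ for every $u\in\A$ together with $C_p^{1/(2p)}\to 2$ gives $\norm{T_G}\le 2\sqrt{\omega}$.

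For the computation of $\norm{T_K}$, partition $[L]$ into the $\omega$ independent parts $V_1,\dots,V_\omega$ of size $L/\omega$ of the Tur\'an graph and set $u_j := \sum_{i\in V_j} s_i$. Within each $V_j$ there are no edges, so $(s_i)_{i\in V_j}$ is a free family of standard semicirculars and their free convolution makes $u_j$ a semicircular element of variance $L/\omega$; in particular $\norm{u_j} = 2\sqrt{L/\omega}$. Between distinct parts every pair of vertices is adjacent in $K$, so by definition of $G$-independence the family $(u_j)_{j=1}^\omega$ is classically independent: the $u_j$'s commute and their joint moments factor. Applying the Gelfand correspondence to the commutative unital $C^*$-subalgebra generated by $(u_j)$, its spectrum is the product $\prod_j \mathrm{spec}(u_j)$, and hence the spectrum of $\sum_j u_j$ is the Minkowski sum of the intervals $[-2\sqrt{L/\omega},2\sqrt{L/\omega}]$. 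This gives
$$
\Bigl\|\sum_{j=1}^\omega u_j\Bigr\| \;=\; \omega\cdot 2\sqrt{L/\omega} \;=\; 2\sqrt{\omega L},
$$
so $\norm{T_K} = 2\sqrt{\omega}$. Combined with Step~1 this yields $\norm{T_G}\le 2\sqrt{\omega} = \norm{T_K}$.

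The only delicate point is the passage from classical independence of the $u_j$'s to the product structure of their joint spectrum: this requires knowing that, inside $(\A,\tau)$ with faithful $\tau$, a commuting tuple whose mixed moments factor actually generates a $C^*$-algebra isomorphic to $C(\prod_j \mathrm{spec}(u_j))$. This is standard once one unpacks the definition of tensor/classical independence in the $C^*$-probability framework used throughout the paper, but it is the step where the commutation encoded in the edges of $K$ is really used. Everything else reduces to a direct application of Theorem~\ref{thm: ScalarCoefficientSharpUpperBound}.
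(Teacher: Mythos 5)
Your proof is correct and follows the same approach as the paper: the upper bound is a direct application of Theorem~\ref{thm: ScalarCoefficientSharpUpperBound} with $\alpha_i = 1/\sqrt{L}$, and the computation of $\norm{T_K}$ uses the free-within-parts, classical-between-parts structure of the Tur\'an graph, which the paper records as the equality in distribution $T_K \overset{d}{=}\frac{1}{\sqrt{\omega}}(s_1+\cdots+s_\omega)$ for classically independent semicircles. The paper leaves implicit the step from factorizing joint moments to the full product spectrum; your Gelfand-theory unpacking (faithfulness of $\tau$ giving full support of the product semicircle law, hence the joint spectrum is the entire cube and the norm is the Minkowski-sum endpoint $2\sqrt{\omega L}/\sqrt{L}=2\sqrt{\omega}$) is exactly the right justification of that step.
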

The computation of $\norm{T_K}$ follows by the equality in distribution
\begin{align*}
    T_K\overset{d}{=}\frac{1}{\sqrt{\omega}}\left(s_1+\cdots+s_\omega\right),
\end{align*}
where $s_1,\ldots,s_\omega$ are classical independent semicircles.

 \begin{proposition}[Graph minimizer of scalar Khintchine]
\label{prop: Turan_minimizer}
    Given a fixed integer $\omega$, and a graph $G$ on $L$ vertices with clique number $\omega$, we have
    \begin{align*}
        \norm{T_G} \geq \norm{T_{N}},
    \end{align*}
    where $N$ is the disjoint union of a complete graph $K_\omega$ on $\omega$ vertices and the edgeless graph $N_{L-\omega}$ on $L-\omega$ vertices. Moreover, we have
    \begin{align*}
        \sqrt{\frac{\omega^2+L-\omega}{L}}\le \norm{T_N} \le 2\sqrt{\frac{\omega^2+L-\omega}{L}}.
    \end{align*}
\end{proposition}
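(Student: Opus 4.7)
I would split the argument into three parts: the upper bound on $\norm{T_N}$, the lower bound on $\norm{T_N}$, and the minimality $\norm{T_N}\le\norm{T_G}$.

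For the upper bound, I would apply Theorem~\ref{thm: ScalarCoefficientSharpUpperBound} with $\alpha_i = 1/\sqrt{L}$. In the graph $N = K_\omega \sqcup N_{L-\omega}$, each of the $\omega$ clique vertices satisfies $|c^*(i)| = \omega$ and each of the $L-\omega$ isolated vertices satisfies $|c^*(i)| = 1$, so $\sum_i |\alpha_i|^2|c^*(i)| = (\omega^2+L-\omega)/L$. Sending $p\to\infty$ in
\[
\norm{T_N}_{2p}\le C_p^{1/(2p)}\sqrt{(\omega^2+L-\omega)/L},
\]
and using $C_p^{1/(2p)}\to 2$, yields the claimed upper bound.

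For the lower bound, I would decompose $T_N = (X+Y)/\sqrt{L}$ with $X = s_1+\cdots+s_\omega$ and $Y = s_{\omega+1}+\cdots+s_L$. Under the $N$-independence structure the clique variables are classically independent, so $X$ is a self-adjoint element of a commutative subalgebra with $\norm{X} = 2\omega$; the isolated variables are mutually free and collectively free from the clique, so $Y$ is a semicircle of variance $L-\omega$ that is free from $X$. For $\epsilon>0$ I would introduce the approximate spectral state
\[
\rho_\epsilon(a) := \frac{\tau(f_\epsilon(X)\,a\,f_\epsilon(X))}{\tau(f_\epsilon(X)^2)},
\]
where $f_\epsilon\ge 0$ is a continuous function concentrating on the spectral region $\{X\approx 2\omega\}$ (and $\tau(f_\epsilon(X)^2)>0$ by faithfulness of $\tau$). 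The spectral theorem gives $\rho_\epsilon(X^2) \to 4\omega^2$, while freeness of $X$ and $Y$ together with $\tau(Y)=0$ yields $\rho_\epsilon(XY) = \rho_\epsilon(YX) = 0$ and $\rho_\epsilon(Y^2) = \tau(Y^2) = L-\omega$. Since $(X+Y)^2$ is positive and $\rho_\epsilon$ is a state, letting $\epsilon\to 0$ gives
\[
\norm{T_N}^2 = \frac{\norm{(X+Y)^2}}{L} \ge \frac{4\omega^2+L-\omega}{L} \ge \frac{\omega^2+L-\omega}{L},
\]
which is in fact stronger than the stated lower bound.

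For the minimality $\norm{T_N}\le\norm{T_G}$, I would first observe that since $G$ has clique number $\omega$, it contains a $K_\omega$-subgraph on some vertex subset $S\subseteq[L]$; declaring the $L-\omega$ vertices of $[L]\setminus S$ isolated exhibits $N$ as an edge-subgraph of $G$, up to a relabeling that does not affect the norm. It then suffices to invoke the monotonicity principle $\norm{T_H}\le\norm{T_G}$ whenever $H$ is an edge-subgraph of $G$. This monotonicity is the main obstacle; it should follow from the trace-monoid / Cayley-graph correspondence central to the paper, namely that $\norm{T_G}$ is identified (up to the $1/\sqrt{L}$ scaling) with the spectral radius of the Cayley graph of the trace monoid $M(G)$, so that adding an edge to $G$ imposes an extra commutation relation that folds vertices of the Cayley graph together, and such folding is non-decreasing in spectral radius. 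Combined with the embedding $N\hookrightarrow G$, this delivers $\norm{T_N}\le\norm{T_G}$.
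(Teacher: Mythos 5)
Your upper bound matches the paper's: apply Theorem~\ref{thm: ScalarCoefficientSharpUpperBound} with $\alpha_i=1/\sqrt{L}$, compute $\sum_i|\alpha_i|^2|c^*(i)|=(\omega^2+L-\omega)/L$ for $N$, and send $p\to\infty$ using $C_p^{1/(2p)}\to 2$.

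Your lower bound takes a genuinely different route. The paper simply cites \cite[Theorem 3.4]{collins2025operatorvaluedkhintchineinequalityepsilonfree}, whereas you give a self-contained argument: split $T_N=(X+Y)/\sqrt{L}$, note $X=s_1+\cdots+s_\omega$ generates a commutative algebra with $\norm{X}=2\omega$, $Y$ is a semicircle of variance $L-\omega$ free from $X$, and use an approximate state $\rho_\epsilon(\cdot)=\tau(f_\epsilon(X)\cdot f_\epsilon(X))/\tau(f_\epsilon(X)^2)$ concentrating near the top of the spectrum of $X$. By traciality, $\rho_\epsilon(XY)=\rho_\epsilon(YX)=0$ and $\rho_\epsilon(Y^2)=\tau(Y^2)$, so $\rho_\epsilon((X+Y)^2)\to 4\omega^2+L-\omega$. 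This is correct (modulo the standard fact that in $N$-independence the algebra of the clique variables is free from the algebra of the isolated ones, which follows since $N$ has no edges between these two vertex sets), and it actually yields the stronger bound $\norm{T_N}^2\ge (4\omega^2+L-\omega)/L$, so it is a nice improvement on the stated inequality.

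The minimality step is where you have a gap. You correctly reduce to showing $\norm{T_H}\le\norm{T_G}$ whenever $H$ is an edge-subgraph of $G$, but the justification you offer --- that adding a commutation relation identifies vertices of the Cayley graph of the trace monoid, and that such a ``folding'' is non-decreasing in spectral radius --- is not a valid general principle. Identifying vertices of a graph can decrease the spectral radius (e.g.\ the cycle $C_6$, of spectral radius $2$, folds onto $K_2$, of spectral radius $1$), so one cannot invoke this as a black box; the claim genuinely relies on the particular way closed walks at the root $e$ inject under the quotient $\T(G)\twoheadrightarrow\T(G')$. The paper avoids this entirely by using Lemma~\ref{lem: moments of convolution in terms of homomorphisms}: since $\tau(T_G^{2p})=L^{-p}\sum_{\pi\in P_2(2p)}\hom(F_\pi,G)$ and $\hom(F,G)\le\hom(F,G')$ whenever $E(G)\subseteq E(G')$, one gets $\norm{T_G}_{2p}\le\norm{T_{G'}}_{2p}$ term-by-term and lets $p\to\infty$ (this is exactly Remark~\ref{remark: monotonicity}). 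You should replace your spectral-radius heuristic with this moment comparison, or, if you want to stay in the Cayley-graph picture, explicitly prove that the label-preserving map on closed walks at $e$ is injective and then invoke the path-counting characterization of $\norm{A}$ from the proof of Theorem~\ref{thm: NormEqualsNumberOfPaths}.
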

The fact that $T_{N}$ minimizes the norm follows from the monotonicity of the norm with respect to the addition of edges (see Remark \ref{remark: monotonicity}). The upper bound on $\norm{T_N}$ follows from Theorem \ref{thm: ScalarCoefficientSharpUpperBound}, whereas the lower bound follows from \cite[Theorem 3.4]{collins2025operatorvaluedkhintchineinequalityepsilonfree}.

While Corollary~\ref{cor: turan} follows as a consequence of our proof of Theorem~\ref{thm: ScalarCoefficientSharpUpperBound}, it would be interesting to obtain it directly via Turán-type arguments. Our proof of Theorem \ref{thm: ScalarCoefficientSharpUpperBound}, however, uncovers a striking connection with the enumeration of closed paths in the Cayley graph of the trace monoid associated with the graph $G$ (see Section~\ref{sec:tracemonoid}). To the best of our knowledge, this link with trace monoids is new. In particular, in the context of Corollary~\ref{cor: turan}, it leads to a characterization of $\norm{T_G}$ via the spectral radius of the corresponding Cayley graph.

\begin{theorem}[Norm characterization via trace monoid]
\label{thm: NormEqualsNumberOfPaths}
Let $G$ be a graph on $L$ vertices and let $s_1, \ldots, s_L$ be $G$-independent semicircles. Let $A$ denote the adjacency matrix of the Cayley graph of the trace monoid associated with $G$ (see Section~\ref{sec:tracemonoid}).
Then, 
\[\norm{\sum_{i=1}^{L}s_i} = \norm{A}.\]
\end{theorem}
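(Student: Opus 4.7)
The plan is to realize the family $(s_i)$ as explicit operators on a Fock space built from the trace monoid $M(G)$, and identify their sum with the adjacency operator $A$ directly on that space. Set $\mathcal{F}_G := \ell^2(M(G))$ with orthonormal basis $\{e_w\}_{w\in M(G)}$, and for each $i\in [L]$ define the left-creation operator $\ell_i$ by $\ell_i e_w := e_{iw}$ (product in $M(G)$), with adjoint $\ell_i^*$ removing a leading $i$ whenever it can be brought to the front via the edge-commutations of $G$, and zero otherwise. The trace-monoid relations give at once $\ell_i^*\ell_i = I$ and $\ell_i\ell_j = \ell_j\ell_i$ exactly when $\{i,j\}\in E(G)$. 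Put $\tilde s_i := \ell_i + \ell_i^*$.

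The central step is to show that $(\tilde s_i)_{i=1}^L$, equipped with the vacuum state $\omega(X) := \langle e_{\mathrm{id}}, X e_{\mathrm{id}}\rangle$, has the same joint $*$-distribution as $(s_i)_{i=1}^L$ under $\tau$. Expanding a moment $\omega(\tilde s_{i_1}\cdots \tilde s_{i_n})$ over sign sequences $\epsilon \in \{\uparrow, \downarrow\}^n$ (choosing $\ell_{i_k}$ or $\ell_{i_k}^*$ at the $k$-th step) yields a count of \emph{legal} closed walks based at $\mathrm{id}$ in the Cayley graph of $M(G)$ whose $k$-th edge is labelled $i_k$. Comparing this combinatorial formula with the moment formula defining $G$-independent semicircles in \cite{mlotkowski2004lambdafree, speicherjanusz2016mixture} — or equivalently checking directly that $(\tilde s_i)$ satisfies the axioms of $G$-independence, namely commutation along edges of $G$ (already noted) together with the appropriate alternating-moment vanishing across non-edges, which follows from $\ell_i^*\ell_i = I$ and the trace-monoid cancellation rules — one obtains the desired equality of joint distributions.

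With the moments matched, faithfulness of $\tau$ on $\A$ and of $\omega$ on $C^{*}(\tilde s_1, \ldots, \tilde s_L)$ (standard, since $e_{\mathrm{id}}$ is a cyclic and separating vector for that $C^*$-algebra — the trace-monoid analog of the classical free Fock argument) promotes the correspondence $s_i \mapsto \tilde s_i$ to an isometric $*$-isomorphism of the generated $C^*$-algebras. In particular $\norm{\sum_i s_i}_\A = \norm{\sum_i \tilde s_i}_{\mathrm{op}}$. A direct computation then gives
\[
\Bigl(\sum_i (\ell_i + \ell_i^*)\Bigr) e_w \;=\; \sum_i e_{iw} \;+\; \sum_{i :\, w = iw' \text{ in } M(G)} e_{w'},
\]
which is precisely the sum of $e_{w'}$ over all vertices $w'$ adjacent to $w$ in the (undirected) Cayley graph of $M(G)$. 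Hence $\sum_i \tilde s_i = A$, and the theorem follows.

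The principal obstacle is the moment-matching step. The commutation half is immediate from the monoid presentation, but the freeness-type half — showing that alternating products of centered elements drawn from non-adjacent generators have vanishing expectation in the sense demanded by $G$-independence — requires carefully tracking the interplay between the action of $\ell_i^*$ and the monoid's cancellation rules. Once this combinatorial bookkeeping is carried out, the faithfulness argument and the identification $\sum_i \tilde s_i = A$ are essentially mechanical.
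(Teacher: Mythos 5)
Your proof is correct, but it takes a genuinely different route from the one in the paper. The paper proves Theorem~\ref{thm: NormEqualsNumberOfPaths} by constructing an explicit bijection $\Phi_p$ between the combinatorial objects $\Pi_{2p}(G)=\{(\pi,\phi):\pi\in P_2(2p),\ \phi\in\Homo(F_\pi,G)\}$ and the closed paths $\Path_{2p}(G)$ on $\operatorname{Cay}(\T(G))$; combined with the moment formula of Lemma~\ref{lem: moments of convolution in terms of homomorphisms}, this gives $\norm{T_G}_{2p}^{2p}=\langle A^{2p}e,e\rangle$ for every $p$, and the norm equality follows upon letting $p\to\infty$ (using \cite{Mohar1989} for the spectral radius of a locally finite graph). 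You instead realize the $G$-independent semicircles concretely on the Fock space $\ell^2(\T(G))$ as $\tilde s_i=\ell_i+\ell_i^*$, observe that $\sum_i\tilde s_i$ literally coincides with the adjacency operator $A$, and transfer the norm via faithfulness of the two traces. The two ingredients you defer --- that $(\ell_i+\ell_i^*)_i$ is a $G$-independent semicircle family and that the vacuum state is faithful on the generated algebra --- are exactly the content of \cite{bozejko1994completely}, which the paper itself invokes in Section~\ref{section: extensions} for the operator-valued inequality, so appealing to them is legitimate rather than a gap. Your route is shorter and more conceptual once that Fock model is granted, but it does not produce the explicit path bijection, which the paper also needs as a working tool for the sharp scalar bound of Theorem~\ref{thm: ScalarCoefficientSharpUpperBound} (via Proposition~\ref{proposition: bounding paths on Cayley}); so the paper's extra work pays off elsewhere. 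One small point you should make explicit in the identification $\sum_i\tilde s_i=A$: left-cancellativity of the trace monoid ensures that the neighbors $\{iw:i\in[L]\}$ and $\{w':w=iw'\text{ for some }i\}$ of a given $w$ are mutually distinct (so no multiplicities arise), and boundedness of $A$ follows since each vertex has degree at most $L+\omega(G)$; both are standard but worth recording.
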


We now turn to the more general setting where the scalar coefficients 
are replaced by operators. In this operator-valued case, we consider sums of the form $\sum_{i=1}^{L}a_i\otimes s_{i}$ where the $s_i$'s are $G$-independent semicircle variables and the $a_i$'s are bounded linear operators on some Hilbert space. This setup extends the free operator-valued Khintchine inequality of Haagerup and Pisier \eqref{eqn: HaagerupPisier} (when $G$ is edgeless). 

\begin{theorem}[Operator-valued Khintchine inequality for mixtures]\label{thm: OperatorvaluedCoefficients}
    Let $G$ be a graph on $[L]$ with clique number $\omega(G)$ and let $(s_{i})_{i=1}^{L}$ be a collection of $G$-independent semicircles. Let $\mathcal{H}$ be a Hilbert space and let $(a_i)_{i=1}^{L}\subseteq B(\mathcal{H})$. Then,
    \begin{align*}
        \norm{\sum_{i=1}^{L} a_i\otimes s_i}_{B(\mathcal{H})\otimes \A} \le 2 \sqrt{\omega(G)} \max\left\{\norm{\sum_{i=1}^{L}a_ia_i^*}_{B(\mathcal{H})}^{1/2}, \norm{\sum_{i=1}^{L}a_i^*a_i}_{B(\mathcal{H})}^{1/2}\right\}\;.
    \end{align*}
\end{theorem}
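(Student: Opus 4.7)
The plan is to generalize the moment-method proof of the free operator-valued Khintchine inequality~\eqref{eqn: HaagerupPisier}, inserting the $\sqrt{\omega(G)}$ factor via the combinatorics of the trace monoid associated with $G$. After a standard finite-rank approximation, it suffices to prove the bound when $a_i\in M_N(\C)$. Setting $T:=\sum_i a_i\otimes s_i$ and $M:=\max\bigl\{\|\sum_i a_i a_i^*\|^{1/2},\|\sum_i a_i^*a_i\|^{1/2}\bigr\}$, the theorem reduces to the moment estimate
\begin{equation*}
(\tau_N\otimes\tau)\bigl((TT^*)^p\bigr)\le C_p\cdot \omega(G)^p\cdot M^{2p}\qquad (p\ge 1),
\end{equation*}
where $C_p$ is the $p$-th Catalan number. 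Indeed, since $\|T\|^2=\|TT^*\|=\lim_{p\to\infty}(\tau_N\otimes\tau)((TT^*)^p)^{1/p}$ by faithfulness of $\tau_N\otimes\tau$, this estimate together with $C_p^{1/p}\to 4$ yields $\|T\|\le 2\sqrt{\omega(G)}\,M$.

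To prove the moment estimate, I would expand $(TT^*)^p$ and invoke the moment formula for $G$-independent semicircles developed in Section~\ref{sec:tracemonoid}, yielding
\begin{equation*}
(\tau_N\otimes\tau)\bigl((TT^*)^p\bigr) = \sum_{\pi\in P_2(2p)}\ \sum_{i:(\pi,i)\text{ is $G$-admissible}} \tau_N\bigl(a_{i_1}a_{i_2}^*\cdots a_{i_{2p-1}}a_{i_{2p}}^*\bigr),
\end{equation*}
where $P_2(2p)$ denotes the set of pair partitions of $[2p]$, and the inner sum runs over tuples $i=(i_a)_{a=1}^{2p}\in[L]^{2p}$ with $i_a=i_b$ for every pair $(a,b)\in\pi$ and with $\{i_a,i_c\}\in E(G)$ for every crossing $(a,b)\prec(c,d)$ of $\pi$. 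The analysis then splits into: (i) a telescoping of the matrix trace along the non-crossing skeleton of $\pi$ into a product of $p$ factors of the form $\sum_i a_ia_i^*$ or $\sum_i a_i^*a_i$, each of norm at most $M^2$; and (ii) a combinatorial count of the $G$-admissible labellings of a fixed $\pi$, controlled by the clique constraint at each crossing.

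The hard part will be step~(i) when $\pi$ has crossings, since the standard Haagerup contraction along a non-crossing partition breaks down once two distinct indices meet at a trace vertex. The remedy is to exploit $G$-admissibility: each crossing in a $G$-admissible $\pi$ can be \emph{locally uncrossed} via a commutation along an edge of $G$, which removes the crossing at the cost of constraining the involved pair values to lie in a common clique of $G$ --- contributing a multiplicative factor of at most $\omega(G)$ per uncrossing. Iterating this procedure reduces $\pi$ to a genuine non-crossing pair partition while accruing a total factor of $\omega(G)^{c(\pi)}\le \omega(G)^p$, after which the Haagerup contraction yields the $M^{2p}$ factor. Summing over the $C_p$ non-crossing pair partitions then completes the proof. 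This uncrossing procedure is the pair-partition-level analogue of the spectral-radius bound on the Cayley graph of $M(G)$ established in Theorem~\ref{thm: NormEqualsNumberOfPaths}.
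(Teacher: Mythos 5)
Your proposal takes a genuinely different route from the paper, but the central step has a gap. The paper does not use a moment expansion for the operator-valued bound. Instead it realizes the $G$-independent semicircles concretely on the Fock space $\Fock(G)=\bigoplus_{w\in \T(G)}\C\cdot x_w$ as $s_i=l(x_i)+l^*(x_i)$, splits $T=\sum_i a_i\otimes s_i$ into its creation and annihilation halves by the triangle inequality, applies the $C^*$-Cauchy--Schwarz bound $\norm{\sum_i c_id_i}\le\norm{\sum_i c_ic_i^*}^{1/2}\norm{\sum_i d_i^*d_i}^{1/2}$ to each half, and thereby reduces everything to computing $\norm{\sum_{i}l(x_i)l^*(x_i)}$. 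That norm is then bounded by $\omega(G)$ directly on $\Fock(G)$: by the Cartier--Foata normal form (Lemma~\ref{lemma: Cartier-Foata decomposition}), $l^*(x_i)x_w\ne 0$ only when $i$ belongs to the first clique of the normal form of $w$, whose size is at most $\omega(G)$. No crossing pair partitions and no matrix traces ever appear in the operator-valued argument.

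The gap in your proposal is the uncrossing step. You propose to locally uncross a $G$-admissible crossing $\{a,b\}$, $\{c,d\}$ ``via a commutation along an edge of $G$.'' But the commutation relation $s_{i_a}s_{i_c}=s_{i_c}s_{i_a}$ lives entirely in the $\tau$-factor of $\tau_N\otimes\tau$ and has already been spent in producing the moment formula with $G$-admissible labellings; what remains is the matrix trace $\tau_N(a_{i_1}a_{i_2}^*\cdots a_{i_{2p-1}}a_{i_{2p}}^*)$, which is order-sensitive, and the coefficients $a_i\in B(\mathcal{H})$ do not commute even when the corresponding $s_i$ do. Turning the crossing blocks into noncrossing ones requires transposing two of the four factors $a_{i_a},a_{i_c},a_{i_b}^*,a_{i_d}^*$ inside $\tau_N$, and there is no identity that permits this. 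Already for $p=2$ and $\pi=\{\{1,3\},\{2,4\}\}$ the contribution is $\sum_{\{i,j\}\in E(G)}\tau_N(a_ia_j^*a_ia_j^*)$, which does not reduce to the noncrossing quantity $\sum_{\{i,j\}\in E(G)}\tau_N(a_ia_i^*a_ja_j^*)$ for general $a_i$. There is also a secondary accounting issue: you assert a total uncrossing cost $\omega(G)^{c(\pi)}\le\omega(G)^p$, but a pair partition of $[2p]$ can have as many as $\binom{p}{2}$ mutually crossing blocks, so the exponent need not be $\le p$ if $c(\pi)$ counts crossings. By contrast, the paper's scalar argument (Proposition~\ref{proposition: bounding paths on Cayley}) attaches one factor bounded by $\omega(G)$ to each down step of the Dyck path --- giving exactly $p$ factors --- via the Cartier--Foata decomposition, not one per crossing; it is this Cartier--Foata mechanism, not uncrossing, that the paper reuses in the operator-valued setting.
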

Here, $B(\mathcal{H})\otimes \A$ denotes the minimal tensor product \cite[Chapter 2]{Pisier2003}. 
Theorem~\ref{thm: OperatorvaluedCoefficients} strengthens a recent result by Collins and Miyagawa \cite[Theorem 1.1]{collins2025operatorvaluedkhintchineinequalityepsilonfree}, who established the following bound 
\begin{equation}
    \label{eqn:AkihiroUpperBound}
     \norm{\sum_{i=1}^{L}a_i\otimes s_{i}}_{B(\mathcal{H})\otimes \A}\leq 2\sqrt{\lambda_1+1}\max\left\{\norm{\sum_{i=1}^{L}a_ia_i^{*}}_{B(\mathcal{H})}^{1/2}, \norm{\sum_{i=1}^{L}a_i^{*}a_i}_{B(\mathcal{H})}^{1/2}\right\}\;,
\end{equation}
where $\lambda_1$ is the largest eigenvalue of the adjacency matrix of the graph $G$. It is a well-known result in graph theory that $\omega(G)\leq \lambda_1(G)+1$. It is worth noting that our proof of Theorem~\ref{thm: OperatorvaluedCoefficients} readily adapts to the setting where the semicircle random variables are replaced with the left-regular representations of the right-angled Artin groups (see Remark~\ref{rem: leftrep}).

The paper is organized as follows. In Section~\ref{sec: prel}, we introduce the necessary definitions and tools. Section~\ref{section: bijection} establishes the formal correspondence with closed paths in trace monoids and, as a consequence, provides a proof of Theorem~\ref{thm: NormEqualsNumberOfPaths}. Section~\ref{section: Cayley graph} is devoted to the proof of Theorem~\ref{thm: ScalarCoefficientSharpUpperBound}, while Section~\ref{section: extensions} addresses the operator-valued setting and proves Theorem~\ref{thm: OperatorvaluedCoefficients}. Finally, in Section~\ref{section: further discussions}, we discuss alternative approaches and open questions.

\section{Preliminaries}\label{sec: prel}
Let $(\A,\tau)$ be a $C^*$-probability space such that $\tau$ is faithful; see \cite[Chapter 3]{nica2006lectures} for definitions. 

\subsection{Classical, Free, and \texorpdfstring{$G$}{G}-independence}
\label{subsec: Definition of independence}
A family $\{\A_i\}_{i\in \N}$ of unital subalgebras of $\A$ is called \emph{independent} if:
\begin{itemize}
    \item The subalgebras commute: $ab=ba$ whenever $a\in \A_i$ and $b\in A_j$ with $i\neq j$; 
    \item For any $m\in \N$, and $a_1\in \A_{i_1}, \ldots, a_m\in \A_{i_m}$ with $\tau(a_1)=\cdots=\tau(a_m)=0$ and all $i_k$ distinct, one has $\tau(a_1\ldots a_m)=0$.
\end{itemize}
Voiculescu introduced freeness (or free independence) as a non-commutative analog of classical independence \cite{voiculescu1985symmetries}. The family $\{\A_i\}_{i\in \N}$ is called \emph{free} if:
\begin{itemize}
    \item For any sequence of elements $a_1\in \A_{i_1}, \ldots, a_m\in \A_{i_m}$ with $\tau(a_1)=\cdots=\tau(a_m)=0$ and $i_1\neq i_2\neq \cdots\neq i_m$, one has $\tau(a_1\ldots a_m)=0$.
\end{itemize}
Unlike classical independence, freeness is associated with the free product rather than the tensor product of algebras. The notion of $G$-independence interpolates between classical and free independence. Formally, let $G=([L], E(G))$ be a simple, loopless graph. Subalgebras $\A_1,\ldots,\A_L \subset \A$ are said to be \emph{$G$-independent} if
\begin{itemize}
    \item For every $(i,j) \in E(G)$, $\A_i$ and $\A_j$ are classically independent (in particular, they commute);
    \item For every $k \ge 1$ and $i \in [L]^k$, if for every repeated index $i_{j_1}=i_{j_2}$, there exists an intermediate index $j_3$ such that $j_1<j_3<j_2$ and $(i_{j_1},i_{j_3}) \notin E(G)$, then for any centered random variables $a_j \in \A_{i_j}$, $j \in [k]$, we have
    \begin{align*}
        \tau(a_1\cdots a_k)=0.
    \end{align*}
\end{itemize}

\subsection{Partitions and moment-cumulant formula} 
Given an integer $p\in \N$, a \emph{pair partition} $\pi=\{V_1, \ldots ,V_{2p}\}$ of $[2p]$ is a collection of disjoint subsets $V_1,\ldots, V_{2p}$, called \emph{blocks}, satisfying
\begin{itemize}
    \item $|V_i|=2$ for all $i=1, \ldots, p$;
    \item $V_1 \cup \cdots \cup V_{p}=[2p]$.
\end{itemize}
We denote the set of all pair partitions by $P_2(2p)$.
We say that two block $V_1,V_2\in \pi$ \emph{cross} if there exists $u_1<u_2<v_1<v_2$ such that $V_1=\{u_1,v_1\}$ and $V_2=\{u_2,v_2\}$. For $\pi \in P_2(2p)$, let $F_\pi$ be the \textit{intersection graph} of $\pi$: a graph over the blocks of $\pi$, with edges between any two crossing blocks.

Given $\pi\in P_2(2p)$ and $i\in [L]^{2p}$, we denote $\pi \preceq i$ if $\{u,v\}\in \pi$ implies that $i_u=i_v$. Given $G=([L],E(G))$ and $i\in [L]^{2p}$, we define the set of $(G,i)$-noncrossing partitions by 
\begin{align}\label{eq: def-non-crossing eps partitions}
\NC_2(G,i):=\big\{\pi \in P_2(2p):\; &\pi \preceq i \text{ and }\, \forall\, u_1<u_2<v_1<v_2:\nonumber\\
& \{u_1,v_1\},\{u_2,v_2\}\in \pi\, \Rightarrow (i_{u_1},i_{u_2}) \in E(G)\big\} 
\end{align}
The following lemma provides the moment-cumulant formula for $ G$-independent semicircle random variables.
\begin{lemma}[\cite{speicherjanusz2016mixture}]\label{lemma: speicher}
    Let $G=([L],E(G))$ be a simple graph and $i \in [L]^{2p}$. Let $s_1,\ldots,s_L$ be $G$-independent semicirlce random variables. Then,
    \begin{align*}
        \tau(s_{i_1}\cdots s_{i_{2p}})=|\NC_2(G,i)|.
    \end{align*}
\end{lemma}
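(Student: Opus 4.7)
The strategy is to apply the moment-cumulant formalism for $G$-independence developed by Speicher–Wysoczański (cited in the statement) to the special case of semicircular elements. The plan is to (i) invoke the general $G$-moment-cumulant formula, (ii) use the vanishing of free and classical cumulants of semicircles beyond order two, and (iii) identify the surviving pair partitions with $\NC_2(G,i)$.

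First, I would recall the Speicher–Wysoczański $G$-cumulant functionals $\kappa^G_\pi$. For $G$-independent subalgebras $\A_1,\ldots,\A_L$, these multilinear functionals obey a moment-cumulant formula of the form
\[
\tau(a_1\cdots a_n) \;=\; \sum_{\pi} \kappa^G_\pi(a_1,\ldots,a_n),
\]
where the sum ranges over those set partitions $\pi$ of $[n]$ which are $G$-admissible relative to the index tuple (any two crossing blocks must be indexed by vertices that are adjacent in $G$), and $\kappa^G_\pi$ factorizes as a product of $G$-cumulants along the blocks of $\pi$. Moreover, a mixed $G$-cumulant vanishes unless all of its arguments lie in a common subalgebra $\A_i$, which is the very content of $G$-independence in cumulant language.

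Next, I would specialize to semicircular elements $s_i$. A standard semicircle satisfies $\kappa_n(s_i,\ldots,s_i) = \delta_{n,2}$ for its free cumulants, and the classical cumulants of the corresponding Gaussian also vanish beyond order two. Since the $G$-cumulants coincide with free cumulants on a single subalgebra and with classical cumulants across edges of $G$, the only non-vanishing $G$-cumulants of the $s_i$ are the pair cumulants $\kappa^G_2(s_i,s_i)=1$. Plugging this into the moment-cumulant formula collapses the sum to one over pair partitions $\pi\in P_2(2p)$ with $\pi\preceq i$ (so that each block carries a single index), and each such admissible pair partition contributes exactly $1$.

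Finally, I would match the admissibility condition with the definition of $\NC_2(G,i)$ in~\eqref{eq: def-non-crossing eps partitions}: two blocks $\{u_1,v_1\},\{u_2,v_2\}$ with $u_1<u_2<v_1<v_2$ cross, so their joint contribution survives precisely when $(i_{u_1},i_{u_2})\in E(G)$, which is exactly the condition defining $\NC_2(G,i)$. Summing the $1$'s then gives $\tau(s_{i_1}\cdots s_{i_{2p}}) = |\NC_2(G,i)|$. The main obstacle is essentially bookkeeping: one must carefully align the abstract $G$-admissibility condition from Speicher–Wysoczański with the combinatorial description used in the paper. The probabilistic content—linearization of $G$-independence by cumulants together with the degree-two vanishing for semicircles—is standard once the framework is in place.
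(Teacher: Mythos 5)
The paper does not prove this lemma itself; it is cited directly from \cite{speicherjanusz2016mixture}, so there is no in-paper argument to compare against. Your outline follows the natural cumulant route: invoke a moment-cumulant expansion for $G$-independence, truncate at degree two for semicircle elements, and identify the surviving pair partitions with $\NC_2(G,i)$. This is a reasonable reconstruction of how the cited source treats the question.

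Two steps in your sketch are stated more confidently than the framework supports, and are worth flagging. First, you write the moment-cumulant formula with the sum already restricted to ``$G$-admissible partitions,'' but in the Speicher--Wysocza\'nski theory the sum is over all partitions of the index set and the admissibility constraint is a consequence of a vanishing-of-mixed-cumulants \emph{theorem}, not part of the definition of the formula; conflating the two hides exactly where the $(G,i)$-non-crossing condition of~\eqref{eq: def-non-crossing eps partitions} enters. Second, the slogan that ``$G$-cumulants coincide with free cumulants on a single subalgebra and with classical cumulants across edges'' is heuristic; what you actually need, and should pin down in the cited framework, is the vanishing of $G$-cumulants of order $\ne 2$ for semicircles and the normalization $\kappa^G_2(s_i,s_i)=1$. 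A self-contained alternative that sidesteps the cumulant machinery is to realize the $s_i$ as $l(x_i)+l^*(x_i)$ on the graph Fock space $\Fock(G)$ --- exactly the model the paper sets up in Section~\ref{section: extensions} --- and compute the vacuum expectation $\langle s_{i_1}\cdots s_{i_{2p}}x_e,x_e\rangle$ by expanding the product of creation and annihilation operators: the nonzero contributions are in bijection with $(G,i)$-non-crossing pair matchings, each contributing $1$. Either route yields the lemma, but the Fock-space computation is elementary given the tools already present in the paper.
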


\subsection{Graph homomorphisms and moments}
For two graphs $F, G$, let $\Homo(F ,G)$ denote the set of graph homomorphisms from $F$ to $G$, and let $\homo(F, G)=|\Homo(F, G)|$ denote its cardinality. The following lemma, expressing moments in terms of graph homomorphisms, will be used repeatedly throughout the paper.
\begin{lemma}\label{lem: moments of convolution in terms of homomorphisms}
    Let $G$ be a graph on $[L]$, and $s_1,\ldots,s_L$ be $G$-independent semicircle random variables. Then
    \begin{align*}
        \tau \left[\left(\sum_{i=1}^Ls_i\right)^{2p}\right]=\sum_{\pi\in P_2(2p)} \hom(F_\pi,G).
    \end{align*}
    Moreover, let $\alpha_1,\ldots,\alpha_L\in \C$ and define $S_{G}=\sum_{i=1}^L\alpha_i s_i$. Then
    \begin{align*}
        \tau [(S_{G}S_{G}^*)^{p}]=\sum_{\pi\in P_2(2p)}\sum_{\substack{i\in [L]^{2p}\\ i\in \Homo(F_\pi,G)}}\alpha_{i_1}\overline{\alpha}_{i_2}\cdots \alpha_{i_{2p-1}}\overline{\alpha}_{i_{2p}},
    \end{align*}
    where $i\in \Homo(F_\pi,G)$ means that $\pi \preceq i$ and $i:V(F_\pi)\to V(G)\in \Homo(F_\pi,G)$.
\end{lemma}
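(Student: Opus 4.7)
The plan is to derive both identities directly from Lemma~\ref{lemma: speicher} by swapping the order of summation and translating the crossing condition in the definition of $\NC_2(G, i)$ into the edge-preservation condition for a graph homomorphism. First I would expand the $2p$-th moment using multilinearity of $\tau$ and apply Lemma~\ref{lemma: speicher}:
\begin{align*}
\tau\left[\left(\sum_{i=1}^{L}s_i\right)^{2p}\right] = \sum_{i\in [L]^{2p}} \tau(s_{i_1}\cdots s_{i_{2p}}) = \sum_{i\in [L]^{2p}} |\NC_2(G, i)|.
\end{align*}
Exchanging summations then gives
\begin{align*}
\tau\left[\left(\sum_{i=1}^{L}s_i\right)^{2p}\right] = \sum_{\pi \in P_2(2p)} \big|\{i \in [L]^{2p} : \pi \in \NC_2(G, i)\}\big|.
\end{align*}

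The crucial step is to identify, for each fixed $\pi$, the inner set with $\Homo(F_\pi, G)$. Indeed, the condition $\pi \preceq i$ says exactly that $i$ is constant on each block of $\pi$, so $i$ factors through a well-defined map $\tilde{i} : V(F_\pi) \to [L]$ sending each block $V = \{u,v\}$ to the common value $i_u = i_v$. The crossing condition in \eqref{eq: def-non-crossing eps partitions} requires that whenever two blocks of $\pi$ cross, their common labels form an edge of $G$; since an edge of $F_\pi$ is by definition a pair of crossing blocks, this is precisely the requirement that $\tilde{i}$ sends edges of $F_\pi$ to edges of $G$, i.e.\ that $\tilde{i} \in \Homo(F_\pi, G)$. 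The correspondence $i \leftrightarrow \tilde{i}$ is a bijection between $\{i : \pi \in \NC_2(G, i)\}$ and $\Homo(F_\pi, G)$, establishing the first identity.

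For the weighted version, I would use that the $s_i$ are self-adjoint, so $S_G^* = \sum_{i=1}^{L} \overline{\alpha_i}\, s_i$, and multilinearity of $\tau$ gives
\begin{align*}
\tau\left[(S_G S_G^*)^{p}\right] = \sum_{i\in [L]^{2p}} \alpha_{i_1}\overline{\alpha_{i_2}}\cdots \alpha_{i_{2p-1}}\overline{\alpha_{i_{2p}}}\, \tau(s_{i_1}\cdots s_{i_{2p}}).
\end{align*}
Applying Lemma~\ref{lemma: speicher} and swapping summations as before, the constraint $\pi \in \NC_2(G, i)$ again translates via the bijection above into $i \in \Homo(F_\pi, G)$ in the sense of the statement, which yields the claimed formula. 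The only real subtlety here is bookkeeping: one must verify that the bijection is compatible with the original indexing so that the monomial $\alpha_{i_1}\overline{\alpha_{i_2}}\cdots$ is preserved under the reorganization, but this is immediate from the recipe $i_u = \tilde{i}(V)$ for $u \in V$. I do not anticipate any genuine obstacle beyond setting up this dictionary cleanly.
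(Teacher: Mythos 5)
Your proposal is correct and follows essentially the same route as the paper's proof: expand by multilinearity, apply Lemma~\ref{lemma: speicher}, exchange the order of summation, and observe that $\pi \in \NC_2(G,i)$ is equivalent to the induced map on blocks being a homomorphism $F_\pi \to G$. The only cosmetic difference is that the paper proves just the weighted identity and notes the unweighted one is the special case $\alpha_i \equiv 1$, whereas you treat both; your spelled-out dictionary between crossing conditions and edge-preservation is precisely what the paper leaves as an ``observe that'' remark.
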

\begin{remark}[Monotonicity of $\norm{T_G}$]\label{remark: monotonicity}
    Note that Lemma \ref{lem: moments of convolution in terms of homomorphisms} implies that $\norm{T_G}$ is monotone with respect to the addition of edges in $G$. That is, if $G'$ is obtained from $G$ by adding one edge $E(G')=E(G)\sqcup \{e\}$, then 
    \begin{align*}
        \norm{T_G}\le \norm{T_{G'}}.
    \end{align*}
    This follows directly as $\hom(F,G)\le \hom(F,G')$, for any graph $F$ and $\norm{T_G}_{2p}\to \norm{T_G}$ as $p$ goes to infinity.
\end{remark}
\begin{proof}
  It suffices to prove the statement for $S_{G}$. We first write
    \begin{align*}
        \tau [(S_{G}S_{G}^*)^{p}]=\sum_{i\in [L]^{2p}}\alpha_{i_1}\overline{\alpha}_{i_2}\cdots \alpha_{i_{2p-1}}\overline{\alpha}_{i_{2p}}\tau(s_{i_1}\cdots s_{i_{2p}}).
    \end{align*}
    By Lemma \ref{lemma: speicher} and the fact that the $s_i$'s are semicircles, we have
    \begin{align*}
        \tau [(S_{G}S_{G}^*)^{p}]=\sum_{i\in [L]^{2p}}\sum_{\pi\in \NC_2(G,i)}\alpha_{i_1}\overline{\alpha}_{i_2}\cdots \alpha_{i_{2p-1}}\overline{\alpha}_{i_{2p}}.
    \end{align*}
    It remains to observe that a partition $\pi\in P_2(2p)$ belongs to $\NC_2(G,i)$ if and only if $i\in \Homo(F_\pi,G)$, completing the proof.
\end{proof}

\subsection{Trace monoid}
\label{sec:tracemonoid}
The second key ingredient in the proof of Theorem~\ref{thm: ScalarCoefficientSharpUpperBound} is the notion of trace monoids (see \cite{Cartier1969} for an introduction from a combinatorial perspective). Given a graph $G$, we define the trace monoid $\T(G)$ by the  presentation
\begin{align*}
    \T(G)=\langle v\in V(G): uv=vu, \forall (u,v)\in E(G)\rangle.
\end{align*}
That is, we view the vertices $v\in V(G)$ as letters in an alphabet, and impose commutation relations $uv=vu$ whenever $u$ and $v$ are adjacent in $G$. We denote by $e\in \T(G)$ the empty word, satisfying $ew=we=w$ for all $w\in \T(G)$. For a word $w=v_1\cdots v_n\in \T(G)$, where $v_1,\ldots, v_n\in V(G)$, we define its length as $|w|=n$. 
A word $c=u_1\cdots u_m$ in the letters $u_1,\ldots,u_m$ is a clique in the trace monoid if its letters form a clique in $G$, namely, $(u_\ell,u_{\ell'})\in E(G)$ for all $\ell\ne \ell'$.
Two cliques $c=v_{t_1}\cdots v_{t_m}$ and $c'=v_{r_1}\cdots v_{r_n}$ are said to be in \emph{normal position}, denoted $c \to c'$, if for every letter $v_{r_j}$ in $c'$, there exists a letter $v_{t_i}$ in $c$ such that $(v_{t_i},v_{r_j})\notin E(G)$; that is, no letter in $c'$ commutes with all letters in $c$. 
The following structural decomposition result will play a central role in our analysis:
\begin{lemma}[Theorem 1.2 in \cite{Cartier1969}]\label{lemma: Cartier-Foata decomposition}
    Let $w\in \T(G)$. Then, there exist unique integer $h\ge 1$ and normal sequence of cliques $c_1 \to c_2, \ldots ,c_{h-1}\to c_h$ in $\T(G)$ such that $w=c_1\cdots c_h$. We refer to $(c_1,\ldots,c_h)$ as the Cartier-Foata decomposition of $w$.
\end{lemma}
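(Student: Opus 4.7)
The plan is to prove both existence and uniqueness by identifying the first clique intrinsically as the set of letters that may legitimately sit at the front of $w$. Define
\begin{equation*}
    \operatorname{init}(w) := \{v \in V(G) : w = vw' \text{ in } \T(G) \text{ for some } w' \in \T(G)\}.
\end{equation*}
The crucial structural fact is that $\operatorname{init}(w)$ is a clique in $G$: if distinct $u,v$ both lie in $\operatorname{init}(w)$, then the two representative words of $w$ (one starting with $u$, the other with $v$) are related by a sequence of elementary commutations $xy \leftrightarrow yx$ applied whenever $(x,y) \in E(G)$, and tracking the leftmost occurrence of $v$ in the $u$-initial representative forces $v$ to be commuted past $u$, so $(u,v) \in E(G)$.

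For existence, set $c_1 := \operatorname{init}(w)$, read as a word of distinct, pairwise commuting letters. Using left-cancellativity of $\T(G)$ (a consequence of the confluence of the commutation rewriting system), one obtains a unique $w_1 \in \T(G)$ with $w = c_1 w_1$, and iterating on $w_1, w_2, \ldots$ produces cliques $c_1, \ldots, c_h$ with $w = c_1 \cdots c_h$; the procedure terminates in at most $|w|$ steps since $\operatorname{init}(w_i)$ is nonempty whenever $w_i$ is nonempty. The normal position condition $c_i \to c_{i+1}$ then follows by contradiction: if some $v \in c_{i+1}$ commuted with every letter of $c_i$, then since $G$ has no self-loops $v \notin c_i$, yet the leading $v$ from $c_{i+1}$ could be pulled past all of $c_i$ to the front of $w_{i-1} = c_i c_{i+1} \cdots c_h$, placing $v \in \operatorname{init}(w_{i-1}) = c_i$, a contradiction.

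For uniqueness, suppose $w = c_1' c_2' \cdots c_k'$ is any decomposition satisfying the normal position condition. It suffices to show $c_1' = \operatorname{init}(w)$; induction applied to $c_2 \cdots c_h = c_2' \cdots c_k'$ (using left-cancellativity) then forces $c_i = c_i'$ for every $i$ and $h = k$. The inclusion $c_1' \subseteq \operatorname{init}(w)$ is immediate. Conversely, fix $v \in \operatorname{init}(w)$ and let $j$ be the smallest index with $v \in c_j'$. If $j > 1$, the normal position condition $c_{j-1}' \to c_j'$ supplies some $u \in c_{j-1}'$ with $(u,v) \notin E(G)$; but bringing a $v$ to the very front of $c_1' \cdots c_k'$ requires pulling it from $c_j'$ or later (no $v$ occurs earlier by minimality of $j$), hence past $u$, which is impossible. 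Therefore $j=1$ and $c_1' = \operatorname{init}(w)$.

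The main obstacle is the opening claim that $\operatorname{init}(w)$ is a clique: it requires a careful analysis of the elementary commutation rewriting (or, equivalently, of the heap-of-pieces representation of $w$), since the argument must traverse the highly non-canonical space of representatives. Once this observation and left-cancellativity of $\T(G)$ are secured, the recursive peeling procedure for existence and the characterization $c_1' = \operatorname{init}(w)$ for uniqueness both proceed cleanly.
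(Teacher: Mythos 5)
The paper does not prove this lemma: it is quoted verbatim as Theorem~1.2 of Cartier--Foata \cite{Cartier1969}, and the lemma is taken as a black box. There is therefore no in-paper proof to compare against; what you have written is a reconstruction of the classical argument.

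As a reconstruction it is sound in outline and follows the standard route: define $\operatorname{init}(w)$, show it is a clique, peel it off, and characterize $c_1'$ as $\operatorname{init}(w)$ for uniqueness. The places where you wave your hands are exactly where all the content of the theorem lives, and you are upfront about this, but it is worth naming the missing ingredient precisely. Both of your two crucial unproved assertions --- that $\operatorname{init}(w)$ is a clique, and that the whole clique $c_1=\operatorname{init}(w)$ left-divides $w$ (not merely each of its letters separately) --- are consequences of a single lemma, Levi's lemma for traces: if $uw_1 = vw_2$ in $\T(G)$ with $u\ne v$ single letters, then $(u,v)\in E(G)$ and there is $r\in\T(G)$ with $w_1 = vr$, $w_2 = ur$. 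Your ``tracking the leftmost occurrence of $v$'' heuristic is the right intuition for it, but as written it does not establish that the leftmost $u$ and the leftmost $v$ must actually cross (you need that $v$'s cannot pass $v$'s and $u$'s cannot pass $u$'s because $G$ is loopless, and that a sign change of the relative position forces an adjacent swap of those two occurrences). Levi's lemma also yields left-cancellativity, which you invoke but do not derive. Once Levi's lemma is in hand, existence, the normal-position property, and the identification $c_1'=\operatorname{init}(w)$ all follow exactly as you describe; the uniqueness induction via cancellation is also fine. So: correct strategy and a valid outline, but the proof would be complete only after proving the Levi/projection lemma for $\T(G)$, which you have deferred under ``a careful analysis of the elementary commutation rewriting.''
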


We say that two words $w_1,w_2\in \T(G)$ are \emph{adjacent}, denoted $w_1\sent w_2$, if there exists a letter $v\in V(G)$ such that $w_1=vw_2$ or $w_2=vw_1$. The Cayley graph $\operatorname{Cay}(\T(G))$ of the trace monoid $\T(G)$ is a graph on the vertex set $\T(G)$ with edges between the adjacent words. 
We now define the set of closed paths of length $2p$ on $\operatorname{Cay}(\T(G))$ by 
$$
\Path_{2p}(G)=\{w\in [\T(G)]^{2p}: e \sent w_1 \sent \cdots \sent w_{2p}= e\}.
$$

\section{Norm characterization via trace monoids}
\label{section: bijection}

The goal of this section is to establish a correspondence between the moments of sums of $G$-independent semicircles and closed paths on the associated trace monoid. This correspondence will yield an immediate proof of Theorem~\ref{thm: NormEqualsNumberOfPaths}. 
In light of Lemma~\ref{lem: moments of convolution in terms of homomorphisms}, it suffices to construct a bijection between $\Path_{2p}(G)$ and
\begin{align*}
    &\Pi_{2p}(G):=\{(\pi, \phi): \pi\in P_2(2p), \phi\in \Homo(F_\pi,G)\}.
\end{align*}
Informally, the bijection $\Phi_p$ is constructed as follows. Given $(\pi, \phi)\in \Pi_{2p}(G)$, we associate a path $\Phi_p(\pi, \phi)$ by processing each block $U=\{s,r\}\in \pi$, with $s<r$ and $i_s=\phi(U)$. At step $s$, we append the letter $i_s$ to the current word, and at step $r$, we remove it. We interpret $s$ as the starting index of $U$ and $r$ as the corresponding return index. For example, consider two commuting letters $(i_1,i_2)\in E(G)$, and let $\pi=\{V_1,V_2\}$ with $V_1=\{1,3\},V_2=\{2,4\}$, and $\phi(V_1)=i_1,\phi(V_2)=i_2$. Then, the corresponding path is
\begin{align*}
    \Phi_2(\pi,\phi)=(i_1, i_2i_1,i_2,e).
\end{align*}
The removal of the letter $i_1$ at the third step is justified as the letters commute.  

Formally, we define recursively a family of mappings $\Phi_{p}:\, \Pi_{2p}(G) \to [\T(G)]^{2p}$ as follows. 
\begin{enumerate}
    \item\label{definition: phi_1} For $p=1$, we have $P_2(2)=\{\pi\}$, where $\pi$ contains the unique block $V=\{1,2\}$. Given $i\in [L]$, let $\phi_i\in \Homo(F_\pi,G)$ be such that $\phi_i(V)=i$. Then, we define
    \begin{align*}
        \Phi_1(\pi,\phi_i)=(i,e).
    \end{align*}
    \item\label{definition: phi_p+1 given phi_p} Assume $\Phi_p$ is defined. Let $(\pi,\phi)\in \Pi_{2(p+1)}(G)$. Let $r\in [2(p+1)]$ be the smallest index such that there exists $s<r$ with $U=\{s,r\}\in \pi$; that is, $r$ is the first return index of a block in $\pi$. Let $i_s=\phi(U)$, and let $\sigma=\pi\setminus\{U\}$ and $\psi=\phi|_\sigma$. Identify
    \begin{align*}
        P_2([2(p+1)]\setminus\{s,r\})\simeq P_2(2p),
    \end{align*}
    by the unique order-preserving bijection between $[2(p+1)]\setminus\{s,r\}$ and $[2p]$. 
    Let $u=\Phi_p(\sigma,\psi)\in [\T(G)]^{2p}$ and write 
    \begin{align*}
        u=(u_k)_{k\in [2(p+1)]\setminus\{s,r\}}.
    \end{align*}
    Then, define
    \begin{align*}
        (\Phi_{p+1}(\pi,\phi))_k=\begin{cases}
            u_k; &\text{ if } k<s;\\
            i_s u_{s-1}; &\text{ if } k=s;\\
            i_su_{k};&\text{ if } s<k<r;\\
            u_{r-1};&\text{ if }k=r;\\
            u_k;& \text{ if } k>r. 
        \end{cases}
    \end{align*}
\end{enumerate}
We refer to the tuple $(i_s,s,r,U,\sigma,\psi)$ as defined in \eqref{definition: phi_p+1 given phi_p}, as the \emph{first-returning block decomposition} of $(\pi,\phi)$.
We begin by establishing that the mapping $\Phi_p$ increases the length of the word at the starting index of a block and decreases it at the corresponding return index. 
\begin{lemma}\label{lemma: phi and addition of letters}
    Let $\Phi_p:\Pi_{2p}(G)\to [\T(G)]^{2p}$ be the mapping defined recursively in \eqref{definition: phi_1} and \eqref{definition: phi_p+1 given phi_p}. Let $(\pi,\phi)\in \Pi_{2p}(G)$ and $w=\Phi_p(\pi,\phi)$. Then, for any block $\{s,r\}\in \pi$ with $s<r$, we have
    \begin{align*}
        |w_s|=|w_{s-1}|+1,\quad \text{and}\;\;\; |w_r|=|w_{r-1}|-1.
    \end{align*}
\end{lemma}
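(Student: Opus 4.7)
The plan is to proceed by induction on $p$, with the convention $w_0 = e$. The base case $p = 1$ follows directly from~\eqref{definition: phi_1}: the definition $\Phi_1(\pi, \phi_i) = (i, e)$ yields $|w_1| = 1 = |w_0| + 1$ and $|w_2| = 0 = |w_1| - 1$ for the unique block $\{1, 2\}$.

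For the inductive step, I would fix $(\pi, \phi) \in \Pi_{2(p+1)}(G)$ with first-returning block decomposition $(i_s, s, r, U, \sigma, \psi)$, and set $u = \Phi_p(\sigma, \psi)$ and $w = \Phi_{p+1}(\pi, \phi)$. The claim must be verified for every block of $\pi$, which splits into two cases.

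For the block $U = \{s, r\}$ itself, the recursion~\eqref{definition: phi_p+1 given phi_p} gives $w_s = i_s u_{s-1}$ and $w_{s-1} = u_{s-1}$ (or $e$ if $s = 1$), so $|w_s| = |w_{s-1}| + 1$. At the return index, $w_r = u_{r-1}$ and $w_{r-1} = i_s u_{r-1}$ (when $r > s+1$), so $|w_r| = |w_{r-1}| - 1$. For a block $\{a, b\} \in \sigma$, the inductive hypothesis applied to $(\sigma, \psi)$ provides $|u_a| = |u_{a_-}| + 1$ and $|u_b| = |u_{b_-}| - 1$, where $a_-$ and $b_-$ denote the immediate predecessors of $a$ and $b$ in the indexing set $[2(p+1)] \setminus \{s, r\}$. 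The recursion attaches the prefix $i_s$ to $u_k$ exactly for $s \le k \le r - 1$ (with the appropriate interpretation at the transition indices), so after cancellation of this prefix on both sides, the differences $|w_a| - |w_{a-1}|$ and $|w_b| - |w_{b-1}|$ reduce to $|u_a| - |u_{a_-}|$ and $|u_b| - |u_{b_-}|$, yielding the claimed $+1$ and $-1$.

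The main obstacle will be the careful bookkeeping at the boundary positions adjacent to the omitted indices $s$ and $r$ — for instance when $a$ or $b$ equals $s+1$ or $r+1$, so that the $u$-predecessor is not simply obtained by subtracting one. A direct case-by-case inspection confirms that the transitional definitions $w_s = i_s u_{s-1}$ and $w_r = u_{r-1}$ compensate this shift exactly, which is what makes the induction close.
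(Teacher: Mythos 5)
Your proof is correct and follows essentially the same route as the paper's: induction on $p$ using the first-returning block decomposition, handling $U=\{s,r\}$ directly from the recursion and all other blocks via the induction hypothesis on $u=\Phi_p(\sigma,\psi)$. You simply spell out in more detail the boundary bookkeeping (the prefix-$i_s$ cancellation and the reindexing across the omitted indices $s,r$) that the paper leaves implicit with ``the claim follows from the induction hypothesis.''
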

\begin{proof}
    The proof proceeds by induction on $p$. For $p=1$, the claims follows directly from~\eqref{definition: phi_1}. Now assume the statement holds for some $p\geq 1$, and consider $(\pi,\phi)\in \Pi_{2(p+1)}(G)$. Let $(i_{s^*},s^*,r^*,U,\sigma,\psi)$ be the first-returning block decomposition of $(\pi,\phi)$. By \eqref{definition: phi_p+1 given phi_p}, we have
    \begin{align*}
        |w_{s^*}|=|i_{s^*}u_{s^*-1}|=|u_{s^*-1}|+1=|w_{s^*-1}|+1.
    \end{align*}
    Similarly, we have $|w_{r^*}|=|w_{r^*-1}|-1$. For all other blocks $V\in \pi$ with $V\ne U$, we have $V\in \sigma$, and the claim follows from the induction hypothesis applied to $u$.
\end{proof}

The next three lemmas establish that $\Phi_p$ is in fact a bijection between $\Pi_{2p}(G)$ and $\Path_{2p}(G)$.
\begin{lemma}
\label{lemma: range of phi_p }
    Let $\Phi_p:\Pi_{2p}(G)\to [\T(G)]^{2p}$ be the mapping defined in \eqref{definition: phi_1} and \eqref{definition: phi_p+1 given phi_p}. Then, for any $(\pi,\phi)\in \Pi_{2p}(G)$, we have $\Phi_p(\pi,\phi)\in \Path_{2p}(G)$.
\end{lemma}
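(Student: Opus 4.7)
I will argue by induction on $p$, following the recursive construction of $\Phi_p$. The base case $p=1$ is immediate from the definition in~\eqref{definition: phi_1}: $\Phi_1(\pi,\phi_i)=(i,e)$ satisfies $e\sent i$, $i\sent e$, and the endpoint condition $w_2=e$. For the inductive step, I fix $(\pi,\phi)\in \Pi_{2(p+1)}(G)$ with first-returning block decomposition $(i_{s^*},s^*,r^*,U,\sigma,\psi)$ and set $u=\Phi_p(\sigma,\psi)$, which by the induction hypothesis lies in $\Path_{2p}(G)$. Writing $w=\Phi_{p+1}(\pi,\phi)$, I need to check three things: (i) $e\sent w_1$, (ii) $w_{k-1}\sent w_k$ for every $k\in [2(p+1)]$, and (iii) $w_{2(p+1)}=e$.

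The verifications at the boundary indices are routine case analysis using the definition~\eqref{definition: phi_p+1 given phi_p}, once we agree on the convention $u_0=e$. For $k<s^*$ or $k>r^*$, the adjacency $w_{k-1}\sent w_k$ reduces to the adjacency in $u$ under the order-preserving re-indexing between $[2(p+1)]\setminus\{s^*,r^*\}$ and $[2p]$. At $k=s^*$ and $k=r^*$, the two successive words differ by precisely the letter $i_{s^*}$, so adjacency is direct. For $k=r^*+1$, both $w_{r^*}$ and $w_{r^*+1}$ equal the corresponding entries of $u$ at consecutive re-indexed positions, so adjacency again follows from the inductive hypothesis. The terminal condition $w_{2(p+1)}=e$ is immediate: either $r^*=2(p+1)$, in which case $w_{2(p+1)}=u_{2(p+1)-1}=e$ by induction, or $r^*<2(p+1)$, in which case $w_{2(p+1)}=u_{2(p+1)}$ corresponds under the bijection to $u^{[2p]}_{2p}=e$.

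The delicate part is the range $s^*<k<r^*$. Here $w_{k-1}$ and $w_k$ both carry the prefix $i_{s^*}$, and the inductive hypothesis gives a letter $v$ with $u_{k}=v\,u_{k-1}$ or $u_{k-1}=v\,u_k$; to deduce the corresponding adjacency for $w$, I need $v$ to commute with $i_{s^*}$ in $\T(G)$, i.e.\ $(i_{s^*},v)\in E(G)$. I expect this to be the main obstacle. The key observation that unlocks it is structural: at such a step, the transition in $u$ is driven by a block $V\in\sigma$ with one endpoint equal to $k$ (or $k-1$) lying in the open interval $(s^*,r^*)$. Since $r^*$ is the \emph{first} right-endpoint in $\pi$, no block of $\pi$ can have both endpoints in $[1,r^*-1]$; therefore the other endpoint of $V$ must lie strictly beyond $r^*$. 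Consequently $V$ interleaves with $U=\{s^*,r^*\}$, i.e.\ $V$ and $U$ are crossing blocks of $\pi$ and hence adjacent in $F_\pi$. Since $\phi\in\Homo(F_\pi,G)$, we conclude $(\phi(V),\phi(U))=(v,i_{s^*})\in E(G)$, which is exactly what is needed.

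To make the bookkeeping honest, I will establish, as an auxiliary step inside the same induction, that the letter appearing at transition $k$ in $w=\Phi_p(\pi,\phi)$ is precisely $\phi(V)$, where $V$ is the block of $\pi$ containing $k$; this strengthens Lemma~\ref{lemma: phi and addition of letters} (which only records the change in length) and is what licenses the identification ``$v=\phi(V)$'' above. With this refined statement in hand, the commutation argument is exactly as sketched, and the three conditions (i)--(iii) combine to give $w\in \Path_{2p}(G)$, completing the induction.
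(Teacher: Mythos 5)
Your proof is correct and follows the same strategy as the paper's: an induction on $p$ through the first-returning block decomposition, with the crucial observation that any block having an endpoint strictly between $s^*$ and $r^*$ must cross $U=\{s^*,r^*\}$ (since $r^*$ is the first return), so its $\phi$-label commutes with $i_{s^*}$. The only difference is bookkeeping — the paper establishes $w_{s}\sent\cdots\sent w_{r-1}$ by explicitly computing $u_q=i_q\cdots i_{s+1}i_{s-1}\cdots i_1$ and shuffling $i_s$ past, whereas you lift each adjacency $u_{k-1}\sent u_k$ individually — and you rightly flag the mild strengthening of Lemma~\ref{lemma: phi and addition of letters} (that the letter added or removed at step $k$ is $\phi(V)$ for the block $V\ni k$) that both arguments quietly rely on.
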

\begin{proof}
   The proof proceeds by induction on $p$. For $p=1$, the statement follows directly from the definition in \eqref{definition: phi_1}. Assume the lemma holds for some $p\geq 1$, and consider $(\pi,\phi)\in \Pi_{2(p+1)}(G)$ with its first-returning block decomposition $(i_s,s,r,U,\sigma,\psi)$. Let $w=\Phi_{p+1}(\pi,\phi)$. By the induction hypothesis, we have that $u=\Phi_p(\sigma,\psi)\in \Path_{2p}(G)$, so we have $e\sent u_1\sent \cdots \sent u_{2p}=e$. From the construction in \eqref{definition: phi_p+1 given phi_p}, it follows that
    \begin{align*}
        &e \sent w_1 \sent \cdots \sent w_{s-1};\\
        &w_{r}\sent \cdots \sent w_{2p}=e.
    \end{align*}
    Furthermore, since $w_{s}=i_sw_{s-1}$ and $w_{r-1}=i_sw_r$, we also have $w_{s-1}\sent w_s$ and $w_{r-1} \sent w_r$. It remains to show that $ w_s\sent\cdots\ \sent w_{r-1}$. Since $U=\{s,r\}\in \pi$ is the first returning block of $\pi$, for every $s<q<r$, there exists $t_q>r$ such that $U_q=\{q,t_q\}\in \pi$. Hence $U$ and $U_q$ cross, and by the definition of $\phi$, we have $(i_q,i_s)\in E(G)$, where $i_q=\phi(U_q)$. Each such index $q\in [r-1]\setminus\{s\}$ corresponds to the opening of a block in $\sigma$. By Lemma \ref{lemma: phi and addition of letters} and the assumption that $u\in \Path_{2p}(G)$, it follows that 
    \begin{align*}
        u_q=i_{q}i_{q-1}\cdots i_{s+1}i_{s-1}\cdots i_1.
    \end{align*}
    Thus, for $q=s+1,\dots, r-1$, we have
    \begin{align*}
        w_q=i_s u_q=i_s i_q\cdots i_{s+1}u_{s-1}=i_{q}\cdots i_{s+1}w_s. 
    \end{align*}
    This shows that $w_s\sent \cdots \sent w_{r-1}$, and hence $w\in\Path_{2(p+1)}(G)$.
\end{proof}
\begin{lemma}[$\Phi_p$ is injective]
     Let $\Phi_p:\Pi_{2p}(G)\to \Path_{2p}(G)$ be the mapping defined by \eqref{definition: phi_1} and \eqref{definition: phi_p+1 given phi_p}. Then, $\Phi_p$ is injective.
\end{lemma}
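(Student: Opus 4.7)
The plan is to proceed by induction on $p$. The base case $p=1$ is immediate from \eqref{definition: phi_1}, since $\Phi_1(\pi, \phi_i) = (i, e)$ lets us read off $i$ from the first coordinate. For the inductive step, assume $\Phi_p$ is injective, and suppose $\Phi_{p+1}(\pi, \phi) = \Phi_{p+1}(\pi', \phi') = w$, with first-returning block decompositions $(i_s, s, r, U, \sigma, \psi)$ and $(i'_{s'}, s', r', U', \sigma', \psi')$ respectively. The strategy is to recover the triple $(s, r, i_s)$ from $w$ alone; the reduced path $u=\Phi_p(\sigma,\psi)$ is then read off from $w$, and the inductive hypothesis applied to $u$ closes the argument.

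Two of the three recoveries are routine. By Lemma~\ref{lemma: phi and addition of letters} together with the fact that every index $k<r$ is an opening (since $r$ is the first-return index), $r$ is characterized as the smallest index at which the word length strictly decreases along $w$; hence $r=r'$. Next, the recursion yields $w_{r-1}=i_s w_r$, and cancellation in $\T(G)$ makes $i_s$ the unique letter satisfying this equation; hence $i_s=i'_{s'}=:i$.

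The main obstacle is the recovery of $s$. For each opening index $k<r$, let $v_k\in V(G)$ be the unique letter with $w_k=v_k w_{k-1}$. A secondary induction unrolling the recursive definition of $\Phi$, combined with cancellation in $\T(G)$ and the graph-homomorphism constraint on $\phi$, shows that $v_k=\phi(V_k)$, where $V_k\in\pi$ is the block containing $k$. For $k\in(s,r)$, the closing index of $V_k$ must exceed $r$ (as $r$ is the first closing), so $V_k$ crosses $U$ in $F_\pi$; the homomorphism condition and the looplessness of $G$ then force $\phi(V_k)$ to be $G$-adjacent to $\phi(U)=i$, in particular distinct from $i$. Hence $v_k\ne i$ for $k\in(s,r)$, while $v_s=i$, which identifies $s$ as the \emph{largest} index in $[r-1]$ with $v_k=i$ — a quantity depending only on $w$. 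This yields $s=s'$, so $U=U'=\{s,r\}$ with common label $i$. Finally, the reduced path $u$ is read off from $w$ via $u_k=w_k$ for $k\notin[s,r]$ and $u_k=i^{-1}w_k$ (by cancellation) for $s<k<r$; applying the inductive hypothesis to $u=\Phi_p(\sigma,\psi)=\Phi_p(\sigma',\psi')$ gives $(\sigma,\psi)=(\sigma',\psi')$, and combined with $U=U'$ and $\phi(U)=\phi'(U')=i$ we conclude $(\pi,\phi)=(\pi',\phi')$. The crux of the argument is the use of looplessness of $G$, which ensures that $i$ does not commute with itself in $\T(G)$; without this feature, the above characterization of $s$ would fail.
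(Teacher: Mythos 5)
Your proof is correct, and it takes a genuinely different route from the paper's. Where the paper argues by contrapositive with a five-case analysis (distinguishing how $(\pi_1,\phi_1)$ and $(\pi_2,\phi_2)$ can differ, and tracking the first index at which the resulting paths diverge), you instead reconstruct the preimage data $(s,r,i_s,\sigma,\psi)$ directly from $w$ — in effect building the inverse map. What this buys you is a single uniform argument replacing the case-split, and a cleaner identification of what makes the map injective: cancellativity of $\T(G)$ and the looplessness of $G$ (i.e.\ that the homomorphism condition forces $\phi(V_k)\ne\phi(U)$ for every block $V_k$ crossing $U$). Interestingly, this is essentially the same recipe the paper itself uses in its \emph{surjectivity} lemma — locate the first descent $r$, read off the removed letter, take $s$ to be the last opening step carrying that letter, strip out the block — so your proof unifies the two lemmas around one inversion procedure, while the paper keeps them as two independent arguments. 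The one place your write-up is terse is the ``secondary induction'' asserting $v_k=\phi(V_k)$: for $s<k<r$ this identity is not purely formal, since one must commute $i_s$ past $\phi(V_k)$ inside $w_k=i_s v_k u_{k-1}$ to write $w_k=v_k w_{k-1}$, and that commutation is exactly what the homomorphism constraint $(\phi(V_k),\phi(U))\in E(G)$ supplies; you cite the constraint but do not make explicit that it is needed here and not just in the $v_k\ne i$ step. Also, ``$i$ does not commute with itself in $\T(G)$'' is an awkward way of stating the relevant fact — what you really use is that $\phi$ cannot send two crossing blocks to the same vertex because $G$ has no loops. With those two points tightened, this is a clean and correct alternative proof.
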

\begin{proof}
   We proceed by induction on $p$. The case $p=1$ follows directly from the definition \eqref{definition: phi_1}. Assume now that the statement holds for some $p\geq 1$. Let $(\pi_1,\phi_1), (\pi_2,\phi_2)\in \Pi_{2(p+1)}(G)$ be such that $(\pi_1,\phi_1)\ne (\pi_2,\phi_2)$. For $j\in \{1,2\}$, denote
    \begin{align*}
        &w^{(j)}=\Phi_{p+1}(\pi_j,\phi_j);\\
        &(i_{s_j}^{(j)},s_j,r_j,U_j,\sigma_j,\psi_j) \sim \pi_j;\\
        &u^{(j)}=\Phi_{p}(\sigma_j,\psi_j),
    \end{align*}
    where the second line corresponds to the first-returning block decomposition of $(\pi_j,\phi_j)$. We distinguish several cases based on how $(\pi_1, \phi_1)$ and $(\pi_2, \phi_2)$ differ. \\
    
\noindent\underline{Case 1}: $\pi_1 = \pi_2$ but $\phi_1 \ne \phi_2$. \\
Then $U := U_1 = U_2$, $s := s_1 = s_2$, and $\sigma := \sigma_1 = \sigma_2$. Hence, either $i_s^{(1)} \ne i_s^{(2)}$ or $\psi_1 \ne \psi_2$. 
\begin{itemize}
    \item If $\psi_1 = \psi_2$, then $u^{(1)} = u^{(2)}$, and since $i_s^{(1)} \ne i_s^{(2)}$, the definition \eqref{definition: phi_p+1 given phi_p} implies $w^{(1)} \ne w^{(2)}$ (difference occurs at step $s$).
    \item If $\psi_1 \ne \psi_2$, then by the induction hypothesis, $u^{(1)} \ne u^{(2)}$. Let $t$ be the minimal index $k \in [2(p+1)] \setminus \{s, r\}$ such that $u_k^{(1)} \ne u_k^{(2)}$. 
    \begin{itemize}
        \item If $t < s$ or $t > r$, then by \eqref{definition: phi_p+1 given phi_p}, we have that $w^{(1)}\ne w^{(2)}$.
        \item If $s<t<r$: 
        \begin{itemize}
            \item If $i_s^{(1)} = i_s^{(2)}$, then $w_t^{(1)} \ne w_t^{(2)}$ since $u_t^{(1)} \ne u_t^{(2)}$. 
            \item If $i_s^{(1)} \ne i_s^{(2)}$, then $w_s^{(1)} \ne w_s^{(2)}$.
        \end{itemize}
        In both subcases, we conclude $w^{(1)} \ne w^{(2)}$.
    \end{itemize}
\end{itemize}
         
\noindent\underline{Case 2}: $\pi_1 \ne \pi_2$ and there exists $t_r < r < q_r$ such that $\{t_r, r\} \in \pi_1$ and $\{r, q_r\} \in \pi_2$.\\
In this case, $r$ is a returning index in $\pi_1$, but a starting index in $\pi_2$. Then, by Lemma~\ref{lemma: phi and addition of letters}, 
        \begin{align*}
            &|w_r^{(1)}|<|w_{r-1}^{(1)}|,\quad |w_r^{(2)}|>|w_{r-1}^{(2)}|,
        \end{align*}
which implies $w^{(1)} \ne w^{(2)}$. By symmetry, the same argument applies if $r$ is a starting index in $\pi_1$ but a returning index in $\pi_2$. \\

In the remaining cases, assume that $\pi_1 \ne \pi_2$ but the set of opening indices in $\pi_1$ and $\pi_2$ are identical. In particular, $r := r_1 = r_2$.\\
\noindent\underline{Case 3}: $U_1 = U_2$, $s := s_1 = s_2$, and $i_s^{(1)} = i_s^{(2)}$. \\
Necessarily, we have $\sigma_1\ne\sigma_2$. Then, by the induction hypothesis, $u^{(1)} \ne u^{(2)}$, and by definition \eqref{definition: phi_p+1 given phi_p}, we conclude $w^{(1)} \ne w^{(2)}$.

\noindent\underline{Case 4}: $U_1 = U_2$, $s := s_1 = s_2$, but $i_s^{(1)} \ne i_s^{(2)}$. \\
Then either $w_{s-1}^{(1)} \ne w_{s-1}^{(2)}$, or if $w_{s-1}^{(1)} = w_{s-1}^{(2)}$, we have $w_s^{(1)} \ne w_s^{(2)}$ since $i_s^{(1)} \ne i_s^{(2)}$. In both subcases, $w^{(1)} \ne w^{(2)}$.

\noindent\underline{Case 5}: $U_1 \ne U_2$. \\
Without loss of generality, suppose $s_1 < s_2$. Let $i_q^{(j)}$, $j\in\{1,2\}$, denote the letter added at step $q$ for $w^{(j)}$, for $1\le q<r$, so that 
 \begin{align*}
            &w_q^{(1)}=i_q^{(1)}\cdots i_1^{(1)}, \quad 
            w_q^{(2)}=i_q^{(2)}\cdots i_1^{(2)}.
        \end{align*}
\begin{itemize}
    \item If $i_{s_1}^{(1)} \ne i_{s_2}^{(2)}$, either $w_{r-1}^{(1)} \ne w_{r-1}^{(2)}$, or if $w_{r-1}^{(1)} = w_{r-1}^{(2)}$, the differing letters removed at step $r$ imply $w_r^{(1)} \ne w_r^{(2)}$.
    \item If $i_{s_1}^{(1)} = i_{s_2}^{(2)}$, note that $i_{s_2}^{(1)} \ne i_{s_1}^{(1)}$ since $i_{s_1}^{(1)}$ must commute with all letters $i_q^{(1)}$ for $s_1<q<r$. Again, either $w_{s_2-1}^{(1)} \ne w_{s_2-1}^{(2)}$, or if $w_{s_2-1}^{(1)} = w_{s_2-1}^{(2)}$, the added letters differ at step $s_2$, hence $w_{s_2}^{(1)} \ne w_{s_2}^{(2)}$.
\end{itemize}
Since all possible cases lead to $w^{(1)} \ne w^{(2)}$, we conclude that $\Phi_{p+1}$ is injective and finish the proof. 
\end{proof}
\begin{lemma}[$\Phi_p$ is surjective]
    Let $\Phi_p:\Pi_{2p}(G)\to \Path_{2p}(G)$ be the mapping defined by \eqref{definition: phi_1} and \eqref{definition: phi_p+1 given phi_p}. Then, $\Phi_p$ is surjective.
\end{lemma}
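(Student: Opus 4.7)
The plan is to induct on $p$. The base case $p=1$ is immediate: any closed path of length $2$ has the form $(v,e)$ with $v\in V(G)$, which is the image under $\Phi_1$ of $(\{\{1,2\}\},\phi_v)$ where $\phi_v(\{1,2\})=v$.

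For the inductive step, fix $w\in \Path_{2(p+1)}(G)$. First identify the block that must play the role of the first-returning block in the preimage. Let $r$ be the smallest index with $|w_r|<|w_{r-1}|$; such $r$ exists since $w_{2(p+1)}=e$ while $|w_1|=1$. By cancellativity of $\T(G)$, for each $k\le r$ there is a unique letter $a_k\in V(G)$ with $w_k=a_k w_{k-1}$ (for $k<r$) or $w_{r-1}=a_r w_r$ (for $k=r$). Set $v:=a_r$ and $s:=\max\{q<r:\, a_q=v\}$.

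The main obstacle is showing that $s$ is well-defined and that $(v,a_q)\in E(G)$ for every $s<q<r$. Since steps $1,\dots,r-1$ are all pushes, $w_{r-1}=a_{r-1}\cdots a_1$, and the identity $w_{r-1}=v w_r$ forces $v$ to lie in the first clique $c_1$ of the Cartier--Foata decomposition of $w_{r-1}$ (Lemma \ref{lemma: Cartier-Foata decomposition}). If some intermediate $q\in(s,r)$ had $(v,a_q)\notin E(G)$, then the copy of $v$ at position $s$ could not be commuted past $a_q$ to reach the front; since $s$ is the \emph{latest} push of $v$ and $(v,v)\notin E(G)$, no earlier copy of $v$ could reach $c_1$ either, contradicting $v\in c_1$.

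With $s$ and $r$ fixed, define $u$ on $[2(p+1)]\setminus\{s,r\}$, identified order-preservingly with $[2p]$, by
\begin{align*}
    u_k=\begin{cases} w_k & \text{if } k<s\text{ or }k>r,\\ v^{-1}w_k & \text{if } s<k<r.\end{cases}
\end{align*}
Using the commutation relations $(v,a_q)\in E(G)$ to push $v$ past each $a_q$, one checks that consecutive $u_k$'s differ by a single letter in $\T(G)$, and that $u\in \Path_{2p}(G)$. By the inductive hypothesis, $u=\Phi_p(\sigma,\psi)$ for some $(\sigma,\psi)\in \Pi_{2p}(G)$. Set $\pi=\sigma\cup\{\{s,r\}\}$ and extend $\psi$ to $\phi$ by $\phi(\{s,r\})=v$. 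Because $r$ is the smallest length-decrease of $w$, it is also the smallest return index of $\pi$, so $\{s,r\}$ is indeed the first-returning block; every block of $\sigma$ crossing $\{s,r\}$ has opening index $q\in(s,r)$, and the homomorphism condition on the new edges of $F_\pi$ reduces precisely to the already-established relations $(v,a_q)\in E(G)$. Finally, unwinding the recursive definition of $\Phi_{p+1}$ on $(\pi,\phi)$ reproduces $w$ step by step, completing the induction.
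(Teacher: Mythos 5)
Your proof is correct and follows essentially the same approach as the paper's: take $r$ to be the first descent, set $s$ to be the latest index below $r$ pushing the letter $v=a_r$, use the commutation relations to contract $w$ to a shorter closed path $u$, apply the inductive hypothesis, and extend $(\sigma,\psi)$ by the block $\{s,r\}$ with $\phi(\{s,r\})=v$. The only difference is presentational: you justify the well-definedness of $s$ and the relations $(v,a_q)\in E(G)$ a bit more explicitly via cancellativity and the Cartier--Foata decomposition of $w_{r-1}$, whereas the paper states these facts more tersely.
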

\begin{proof}
    We proceed by induction on $p$. The case $p=1$ is straightforward from the definition. Now suppose $w\in \Path_{2(p+1)}(G)$, and let $r\in [2(p+1)]$ be the smallest index such that $|w_r|<|w_{r-1}|$, that is, $r$ is the first return. Let $i_r$ be the letter such that $w_{r-1}=i_rw_r$. Since $w\in \Path_{2(p+1)}(G)$, the letter $i_r$ must have been added at a previous step. Let $s<r$ be the largest such index such that $i_s=i_r$. Then we have 
    \begin{align*}
        w_{s}=i_sw_{s-1}=i_rw_{s-1},
    \end{align*}
Now, for each $s<q<r$, let $i_q\ne i_r$ be the letter added at step $q$, so that
    \begin{align*}
        w_q=i_qw_{q-1}=i_q\cdots i_{s+1}i_sw_{s-1}.
    \end{align*}
    Since $i_q\ne i_r=i_s$ and $w_r$ results from removing $i_r$, it must be that all letters $i_q$, with $s<q<r$, commute with $i_r$. Therefore, 
    \begin{align}\label{equation: commuting ways w}
        w_q=i_s i_{q}\cdots i_{s+1}w_{s-1};\quad q=s+1,\ldots, r-1.
    \end{align}
    Define a path $u\in [\T(G)]^{[2(p+1)]\setminus\{s,r\}}$ by
    \begin{align*}
        u_k=\begin{cases}
            w_k;&\text{ if } k<s;\\
            i_{q}\cdots i_{s+1}w_{s-1};&\text{ if }s<k<r;\\
            w_k;&\text{ if } k>r.
        \end{cases}
    \end{align*}
    By construction, and since $w\in \Path_{2(p+1)}(G)$, it follows that $u\in \Path_{2p}(G)$, using the order-preserving bijection between $[2(p+1)]\setminus\{s,r\}$ and $[2p]$. From \eqref{equation: commuting ways w}, we also obtain
    \begin{align}\label{equation: w and u}
        w_k=\begin{cases}
            u_k; &\text{ if } k<s;\\
            i_s u_{s-1}; &\text{ if } k=s;\\
            i_su_{k};&\text{ if } s<k<r;\\
            u_{r-1};&\text{ if }k=r;\\
            u_k;& \text{ if } k>r. 
        \end{cases}
    \end{align}
    By the induction hypothesis, there exists $(\sigma,\psi)\in \Pi_{2p}(G)$ such that $u=\Phi_p(\sigma,\psi)$ with $\sigma\in P_2([2(p+1)]\setminus\{s,r\})$. Define the extension $(\pi,\phi)$ of $(\sigma,\psi)$ by 
    \begin{align*}
        &\pi:=\sigma \cup \{\{s,r\}\},\quad \phi(\{s,r\}):=i_r.
    \end{align*}
    Since $(i_q,i_r)\in E(G)$ for all $s<q<r$, we have that $\phi\in \Homo(F_\pi,G)$. Using \eqref{equation: w and u}, we conclude that $w=\Phi_{p+1}(\pi,\phi)$. Hence, $\Phi_{p+1}$ is surjective, completing the proof.
\end{proof}

We conclude this section with the proof of Theorem~\ref{thm: NormEqualsNumberOfPaths}.
\begin{proof}[Proof of Theorem~\ref{thm: NormEqualsNumberOfPaths}]
Let $A$ be the adjacency matrix of the Cayley graph $\operatorname{Cay}(\T(G))$ of the trace monoid $\T(G)$. Note that 
\[|\Path_{2p}(G)|=\langle A^{2p}e,e\rangle\;.\]
On the other hand, Lemma~\ref{lem: moments of convolution in terms of homomorphisms} and the bijection between $\Pi_{2p}(G)$ and $\Path_{2p}(G)$ yield
\[ |\Path_{2p}(G)|=\norm{T}_{2p}^{2p} \;.\]
The desired conclusion now follows by observing that
\[\norm{T_{G}} = \lim_{p\to \infty} \left(\norm{T}_{2p}^{2p}\right)^{\frac{1}{2p}} = \lim_{p\to \infty}\langle A^{2p}e,e\rangle^{\frac{1}{2p}} = \norm{A}.\]
The first equality follows from Proposition \cite[Proposition 3.17]{nica2006lectures}, whereas the last follows from classical results on locally finite graphs; see \cite{Mohar1982}, \cite[Theorem 4.4]{Mohar1989}.
\end{proof}


\section{Scalar Khintchine inequality}
\label{section: Cayley graph} 
Given a path $w\in \Path_{2p}(G)$, we associate to it a Dyck path $\varepsilon=\varepsilon(w)\in D_{2p}$ as 
\begin{align*}
    \varepsilon_{k}=\begin{cases}
        +1;&\text{ if } |w_{k}|>|w_{k-1}|;\\
        -1;&\text{ otherwise.}
    \end{cases}
\end{align*}
It is immediate from the definition of $\Path_{2p}(G)$ that $\varepsilon$ is indeed a Dyck path, meaning its partial sums are nonnegative and the total sum equals zero. For each $\varepsilon \in D_{2p}$, we define a subset of paths corresponding to $\varepsilon$ by 
\[\Path_{2p}^{(\varepsilon)}(G)=\{w\in \Path_{2p}(G): \varepsilon(w)=\varepsilon\};\]
and the index sets of ``up'' and ``down'' steps as
\begin{align*}
\Up=\{k\in [2p]: \varepsilon_k=+1\}\quad\text{and}\quad \Down=\{k\in [2p]: \varepsilon_k=-1\}.
\end{align*}
For each $\varepsilon\in D_{2p}$ and every labelling $i\in [L]^{\Up}$ of up steps in $\varepsilon$, we further define $\Path_{2p}^{(\varepsilon)}(G,i)$, the set of all paths in $\Path_{2p}^{\varepsilon}(G)$ that are `compatible' with the labelling $i$ by
\begin{align*}
    \Path_{2p}^{(\varepsilon)}(G,i)=\{w\in \Path_{2p}^{(\varepsilon)}(G): w_k=i_kw_{k-1} \text{ for all }k\in\Up\}.
\end{align*}
That is, we fix the letter $i_k$ that was added to the word at each up-step $k$, so that $w_{k}$  is obtained from $w_{k-1}$ by left multiplication with $i_k$. The main proposition of this section is the following.
\begin{proposition}\label{proposition: bounding paths on Cayley}
   Let $G$ be a graph on the vertex set $[L]$. For each $v\in [L]$, let $c^*(v)$ denote the size of the largest clique in $G$ containing $v$. For any $\varepsilon\in D_{2p}$ and any $i\in [L]^{\Up}$, we have
    \begin{align*}
        |\Path_{2p}^{(\varepsilon)}(G,i)|\le \min\left\{\prod_{k\in \Up}|c^{*}(i_k)|, p!\right\}\;.
    \end{align*}  
In particular, we have 
\begin{align*}
        |\Path_{2p}^{(\varepsilon)}(G,i)|\le \min( \omega(G)^{p}, p!),
    \end{align*}  
 where $\omega(G)$ denotes the clique number of $G$.    
\end{proposition}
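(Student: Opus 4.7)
There are two bounds to establish. The bound $|\Path_{2p}^{(\varepsilon)}(G, i)| \le p!$ is immediate from the bijection of Section~\ref{section: bijection}: given $\varepsilon$ and $i$, each path corresponds to a pair partition $\pi \in P_2(2p)$ matching up-steps to down-steps (with the labelling $\phi$ forced by $\phi(\{s,r\}) = i_s$), and there are at most $|\Up|! = p!$ such bijective matchings.

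For the sharper bound $\prod_{k \in \Up} |c^{*}(i_k)|$, the strategy is to induct on $|\varepsilon|$ after strengthening the statement to allow an arbitrary starting word. Given $w_0 \in \T(G)$ and a pair $(\varepsilon, i)$ with labels indexed by the up-steps of $\varepsilon$, let $N(w_0, \varepsilon, i)$ be the number of walks in $\operatorname{Cay}(\T(G))$ from $w_0$ to $e$ with shape $\varepsilon$ and up-step labels $i$. Define
\[T(w_0) := \prod_{v} |c^{*}(v)|,\]
where the product runs over the letters of $w_0$ counted with multiplicity; this is well-defined since the trace-monoid relations preserve multisets. The plan is to prove
\[N(w_0, \varepsilon, i) \le T(w_0)\, \prod_{k} |c^{*}(i_k)|,\]
the product being over the up-steps of $\varepsilon$. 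Specializing to $w_0 = e$ yields the desired bound since $T(e) = 1$.

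The up-step case of the induction is straightforward: writing $\varepsilon', i'$ for the sequence with the first step removed, $N(w_0, \varepsilon, i) = N(i_1 w_0, \varepsilon', i')$, and the identity $T(i_1 w_0) = |c^{*}(i_1)| T(w_0)$ absorbs the missing factor in the product. In the down-step case, let $F(w_0)$ denote the first clique in the Cartier--Foata decomposition of $w_0$. Then
\[N(w_0, \varepsilon, i) = \sum_{v \in F(w_0)} N(w_0/v, \varepsilon', i'),\]
and $T(w_0/v) = T(w_0)/|c^{*}(v)|$, so the inductive hypothesis gives
\[N(w_0, \varepsilon, i) \le T(w_0)\, \prod_{k} |c^{*}(i_k)| \cdot \sum_{v \in F(w_0)} \frac{1}{|c^{*}(v)|}.\]

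The main obstacle is that the naive induction without the weight $T$ would produce a spurious factor $|F(w_0)|$ at every down-step. The crux that closes the argument is the clique inequality
\[\sum_{v \in F(w_0)} \frac{1}{|c^{*}(v)|} \le 1.\]
By the Cartier--Foata description, $F(w_0)$ is a clique in $G$; hence every $v \in F(w_0)$ lies in a clique of size $|F(w_0)|$, forcing $|c^{*}(v)| \ge |F(w_0)|$ and making the sum at most $1$. The weight $T$ is engineered precisely so that this clique inequality drives the induction through. The ``in particular'' statement follows at once from $|c^{*}(i_k)| \le \omega(G)$.
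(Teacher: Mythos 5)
Your proof is correct, and it shares the two essential ingredients with the paper's argument: the Cartier--Foata normal form to identify which letters are removable at a down step, and the clique inequality $\sum_{v\in c_1} 1/|c^*(v)| \le 1$. However, the inductive scaffolding is genuinely different, and arguably cleaner. The paper inducts on $p$: it locates the first down step at position $k+1$, identifies the matching up step $t_r$ inside the first Cartier--Foata clique of $w_k$, deletes the pair $(t_r, k+1)$ to obtain a shorter Dyck path $\varepsilon^{(r)}$, and applies the induction hypothesis to $\Path_{2p}^{(\varepsilon^{(r)})}(G, i^{(r)})$. This requires checking that the truncated sequence is still a valid closed walk from $e$ to $e$, which in turn uses that $i_{t_r}$ commutes with all letters added between step $t_r$ and the down step. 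You sidestep that verification entirely by strengthening the statement to count walks from an arbitrary $w_0$ to $e$ and weighting by $T(w_0) = \prod_{v} |c^*(v)|$ over the letters of $w_0$ with multiplicity (well-defined since trace-monoid relations preserve the letter multiset). The induction then proceeds one step at a time, with the up-step case absorbed by $T(i_1 w_0) = |c^*(i_1)| T(w_0)$ and the down-step case closed by the clique inequality, with no need to re-normalize the remaining path to a Dyck path anchored at $e$. The price is the extra bookkeeping of the weight $T$; what you buy is a simpler step-by-step induction on $|\varepsilon|$ with a trivially true base case. The $p!$ bound argument is the same as the paper's.
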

\begin{proof}
Since all labels $i_k$ corresponding to up steps $k\in \Up$ are fixed, the only remaining choices in constructing a path in $\Path_{2p}^{(\varepsilon)}(G, i)$ concern the letters to be removed at each down step. Observe that for any such down step at time $t$, we must have $w_{t-1}= j_tw_t$ for some $j_t\in [L]$. Since $w\in \Path_{2p}(G)$, each letter removed at a down step must be one that was previously added during an up step. That is, every down label $j_t$ must be equal to some previous up label, implying that the collection of down labels is a permutation of the up labels. Consequently, we have the simple upper-bound 
    \begin{align*}
        |\Path_{2p}^{(\varepsilon)}(G,i)|\le p!
    \end{align*}
We now establish the sharper bound 
    \[|\Path_{2p}^{(\varepsilon)}(G,i)|\le \prod_{k\in \Up}|c^{*}(i_k)|\;,\] 
    by induction on $p$. The claim holds trivially for $p=1$. 
    Assume the statement holds for some $p\ge 1$, and consider $\varepsilon\in D_{2(p+1)}$. Let $k+1$ be the index of the first down step in $\varepsilon$, i.e.,  
    \begin{align*}
        k+1=\min\{t\in [2(p+1)]: t\in \Down\}.
    \end{align*}
    Let $w_k=i_k\cdots i_1$ be the word obtained at step $k$. By Lemma \ref{lemma: Cartier-Foata decomposition}, there exists a unique integer $h\ge 1$ and a normal sequence $c_1\to c_2\to \cdots \to c_h$ such that $w_k=c_1\cdots c_h$. Write $c_1=i_{t_1}\cdots i_{t_{|c_1|}}$, where the indices $t_1,\ldots, t_{|c_1|}\in [k]$ are determined by the fixed sequence $i_1,\ldots, i_k$. Then, the letter $j\in [L]$ removed at step $k+1$ must satisfy 
    \begin{align*}
        j\in \{i_{t_1},\cdots ,i_{t_{|c_1|}}\}.
    \end{align*}
    For each $r= 1,\ldots, |c_1|$, define the Dyck path $\varepsilon^{(r)}\in D_{2p}$ by removing the up step at position $t_r$ and the down step at position $k+1$, i.e., 
    \begin{align*}
        \varepsilon^{(r)}=(\varepsilon_1,\ldots, \varepsilon_{t_{r-1}},\varepsilon_{t_{r+1}},\ldots, \varepsilon_{k},\varepsilon_{k+2},\ldots, \varepsilon_{2p})\in D_{2p}.
    \end{align*}
    Then 
    \begin{align*}
        |\Path_{2(p+1)}^{(\varepsilon)}(G,i)|=\sum_{r=1}^{|c_1|}|\Path_{2p}^{(\varepsilon^{(r)})}(G,i^{(r)})|,
    \end{align*}
    where $i^{(r)}\in {\varepsilon^{(r)}}^{\uparrow}$ is the restriction of $i$ to the remaining up steps, i.e., 
    \begin{align*}
        i^{(r)}=(i_k)_{k\in \Up\setminus\{t_r\}}. 
    \end{align*}
    Applying the induction hypothesis to each term in the sum yields 
    \begin{align*}
        |\Path_{2(p+1)}^{(\varepsilon)}(G,i)|\le \sum_{r=1}^{|c_1|}\prod_{k\in  {\varepsilon^{(r)}}^{\uparrow}}|c^*(i_k)|=\prod_{k\in \Up}|c^*(i_k)|\left(\sum_{r=1}^{|c_1|}\frac{1}{|c^*(i_{t_r})|}\right).
    \end{align*}
   To complete the inductive step, note that since $c_1$ is a clique containing each $i_{t_r}$, we have $|c_1| \le |c^*(i_{t_r})|$ for all $r$, and therefore it follows that
    \begin{align*}
        \sum_{r=1}^{|c_1|}\frac{1}{|c^*(i_{t_r})|}\le \frac{|c_1|}{\min_{r}|c^*(i_{t_r})|}\le 1.
    \end{align*}
\end{proof}

Theorem \ref{thm: ScalarCoefficientSharpUpperBound} is a direct consequence of Proposition \ref{proposition: bounding paths on Cayley}.
\begin{proof}[Proof of Theorem~\ref{thm: ScalarCoefficientSharpUpperBound}]
By Lemma \ref{lem: moments of convolution in terms of homomorphisms} and the triangle inequality, we immediately obtain that for any complex coefficients $\alpha_1,\ldots,\alpha_L\in \C$,
\begin{align*}
    \norm{\sum_{i=1}^L\alpha_is_i}_{2p} \le \norm{\sum_{i=1}^L |\alpha_i|s_i}_{2p}.
\end{align*}
It thus suffices to consider the case where all coefficients $\alpha_1, \ldots, \alpha_L$ are real and nonnegative. Using Lemma~\ref{lem: moments of convolution in terms of homomorphisms} together with the bijection $\Phi$ from $A_{2p}(G)$ to $\Path_{2p}(G)$ constructed in Section~\ref{section: bijection}, we have
    \begin{align*}
\norm{\sum_{i=1}^{L}\alpha_is_{i}}_{2p}^{2p}=\sum_{\pi\in P_2(2p)}\sum_{i\in \Homo(F_\pi,G)}\alpha_{i_1}\cdots \alpha_{i_{2p}}=\sum_{w\in \Path_{2p}(G)}\alpha_{i_1^{(w)}}\cdots \alpha_{i_{2p}^{(w)}},
    \end{align*}
    where $i^{(w)}$ denotes the label sequence associated with $w$. In the sequel, we drop the superscript $(w)$ for ease of notation. We now decompose each path $w$ according to its associated Dyck path $\varepsilon\in D_{2p}$ and the labels of the up steps. This yields 
    \begin{align*}
\norm{\sum_{i=1}^{L}\alpha_is_{i}}_{2p}^{2p}=\sum_{\varepsilon\in D_{2p}}\sum_{i\in [L]^{\Up}}\sum_{w\in \Path_{2p}^{(\varepsilon)}(G,i)}\alpha_{i_1}\cdots \alpha_{i_{2p}}.
    \end{align*}
   Since each label appears exactly twice, once at the up step and once it is removed at the matching down step, we can rewrite the product as 
   $$ 
   \alpha_{i_1}\cdots \alpha_{i_{2p}}=\prod_{k\in \Up}\alpha_{i_k}^2.$$ 
   Substituting this expression, we get
    \begin{align*}
\norm{\sum_{i=1}^{L}\alpha_is_{i}}_{2p}^{2p}=\sum_{\varepsilon\in D_{2p}}\sum_{i\in [L]^{\Up}}\left(\prod_{k\in \Up}\alpha_{i_k}^2\right)|\Path_{2p}^{(\varepsilon)}(G,i)|.
    \end{align*}
   Now, we apply Proposition \ref{proposition: bounding paths on Cayley}. On the one hand, we get 
    \begin{align*}
\norm{\sum_{i=1}^{L}\alpha_is_{i}}_{2p}^{2p}\le\sum_{\varepsilon\in D_{2p}}\sum_{i\in [L]^{\Up}}\prod_{k\in \Up}\alpha_{i_k}^2|c^*(i_k)|=C_p\left(\sum_{i=1}^L\alpha_i^2|c^*(i)|\right)^p,
    \end{align*}
    where $C_p = |D_{2p}|$ is the $p$-th Catalan number. 
  On the other hand, using the bound $|\Path_{2p}^{(\varepsilon)}(G,i)|\leq p!\leq p^p$, we obtain
  \[\norm{\sum_{i=1}^{L}\alpha_is_{i}}_{2p}^{2p}\leq C_p\;p^p\left(\sum_{i=1}^L\alpha_i^2\right)^p\;.\]
  Combining the last two inequalities, we deduce 
    \begin{align*}
        \norm{\sum_{i=1}^{L}\alpha_is_{i}}_{2p}\leq C_p^{\frac{1}{2p}}\min\left\{\sum_{i=1}^{L}|\alpha_i|^2 |c^*(i)|,\;\; p\sum_{i=1}^{L}|\alpha_i|^2\right\}^{1/2}\;.
    \end{align*}
\end{proof}


\section{Operator-valued Khintchine inequalities}\label{section: extensions}

In this section, we extend the analysis to the operator-valued setting and prove Theorem~\ref{thm: OperatorvaluedCoefficients}. Our argument relies on a Fock space framework defined by the graph structure of $G$ and a careful analysis of associated creation and annihilation operators. While the proof strategy is reminiscent of that in~\cite[Proposition 4.8]{haagerup1993}, we incorporate key new ingredients that are specific to the $G$-independence setting.


We consider the Fock space construction
\begin{align*}
    \Fock(G)= \bigoplus_{w\in \T(G)}\C\cdot x_w,
\end{align*}
where each $\C\cdot x_w$ is a one-dimensional Hilbert space orthogonal to all others.  We define the left creation $l(x_i)$ for $i\in [L]$ via
\begin{align*}
    l(x_i) x_w=x_{i\cdot w},
\end{align*}
and extend it by linearity to $\Fock(G)$. The corresponding annihilation operator $l^*(x_i)$ is defined by
\begin{align*}
    l^*(x_i)x_w=\begin{cases}
        x_{w'}; &\text{ if }w=i\cdot w' \text{ for some }w'\in \T(G),\\
        0;&\text{ otherwise},
    \end{cases}
\end{align*}
and also extended linearly to $\Fock(G)$.
As shown in \cite{bozejko1994completely}, the operators $s_{i}:=l(x_i)+l^*(x_i)$, for $i\in [L]$, are $G$-independent semicircle elements in the space $B(\Fock(G))$ of bounded linear operators on $\Fock(G)$, equipped with the trace
\begin{align*}
    \rho(b)=\langle b(x_e),x_e\rangle,
\end{align*}
for $b\in B(\Fock(G))$, and we recall that $e\in \T(G)$ is the empty word. Moreover, restricted to the algebra $\mathcal{A}$ generated by $(l(x_i)+l^*(x_i))_{i\in [L]}$, the trace is faithful \cite[Theorem 4.3]{bozejko1994completely}.

\begin{proof}[Proof of Theorem \ref{thm: OperatorvaluedCoefficients}]
    Let $(a_i)_{i=1}^{L}\subseteq B(\mathcal{H})$ be a collection of bounded linear operators on some Hilbert space $\mathcal{H}$. 
    Let $$T=\sum_{i=1}^{L} a_i\otimes s_i=\sum_{i=1}^{L} a_i\otimes (l(x_i)+l^*(x_i)).$$ 
    By the triangle inequality, we have
    \begin{align*}
        \norm{T}_{B(\mathcal{H})\otimes \A}\le \norm{\sum_{i=1}^{L}a_i\otimes l(x_i)}_{B(\mathcal{H})\otimes \A}+ \norm{\sum_{i=1}^{L}a_i\otimes l^*(x_i)}_{B(\mathcal{H})\otimes \A}.
    \end{align*}
    For any $C^*$-algebra $(\mathcal{B},\norm{\cdot})$ and elements $c_1,\ldots,c_L,d_1,\ldots,d_L\in \mathcal{B}$, we have the following Cauchy-Schwarz type inequality
    \begin{align}\label{equation: cauchy-schwarz}
        \norm{\sum_{i=1}^L c_id_i} \le \norm{\sum_{i=1}^L c_ic_i^*}^{1/2}\norm{\sum_{i=1}^Ld_i^*d_i}^{1/2}.
    \end{align}
    We apply \eqref{equation: cauchy-schwarz} to $\mathcal{B}=B(\mathcal{H})\otimes \A$, $c_i=1\otimes l(x_i)$ and $d_i=a_i\otimes 1$ where $1$ is the identity operator in their respective spaces. It yields
    \begin{align*}
        \norm{\sum_{i=1}^{L}a_i\otimes l(x_i)}_{B(\mathcal{H})\otimes \A}\le \norm{\sum_{i=1}^{L}a_i^*a_i}_{B(\mathcal{H})}^{1/2}\norm{\sum_{i=1}^{L}l(x_i)l^*(x_i)}_{\A}^{1/2},
    \end{align*}
    Similarly, for $c_i=a_i\otimes 1$ and $d_i=1\otimes l^*(x_i)$, we have
    \begin{align*}
        \norm{\sum_{i=1}^{L}a_i\otimes l^*(x_i)}_{{B(\mathcal{H})\otimes \A}}\le \norm{\sum_{i=1}^{L}a_ia_i^*}^{1/2}_{B(\mathcal{H})}\norm{\sum_{i=1}^{L}l(x_i)l^*(x_i)}_{\A}^{1/2}.
    \end{align*}
    Combining both estimates, we obtain 
    \begin{align}\label{equation: ov equation}
        \norm{T}_{B(\mathcal{H})\otimes \A}\le 2|T|_2\norm{\sum_{i=1}^{L}l(x_i)l^*(x_i)}_{\A}^{1/2},
    \end{align}
    where we define
    \begin{align*}
        |T|_2:=\max\left\{\norm{\sum_{i=1}^{L}a_ia_i^*}_{B(\mathcal{H})}^{1/2},\norm{\sum_{i=1}^{L}a_i^*a_i}_{B(\mathcal{H})}^{1/2}\right\}.
    \end{align*}
    Let $\mathcal{L}=\sum_{i=1}^{L}l(x_i)l^*(x_i)$.
    Note that $\mathcal{L}\succeq 0$, i.e., $\mathcal{L}=\mathcal{L}^*$ and $\langle x,\mathcal{L}x\rangle \ge 0$ for all $x\in \Fock(G)$. To compute $\norm{\mathcal{L}}$, let $y\in \Fock(G)$. We denote
    \begin{align*}
        y=\sum_{w\in \T(G)}\beta_w x_w,\quad \text{where }\beta_w=\langle y,x_w \rangle.
    \end{align*}
    We write 
    \begin{align*}
        \langle y,\mathcal{L}y\rangle =\sum_{w_1,w_2\in \T(G)}\beta_{w_1}\overline{\beta}_{w_2}\sum_{i=1}^{L}\langle x_{w_1}, l(x_i)l^*(x_i) x_{w_2}\rangle.
    \end{align*}
    Note that $l(x_i)l^*(x_i)x_w=x_w$ whenever $l^*(x_i)x_w\neq 0$. Thus
    \begin{align*}
        \langle y,\mathcal{L}y\rangle 
        &=\sum_{w\in \T(G)}|\beta_w|^2|\{i\in [L]: l^*(x_i)x_w\neq 0\}|.
    \end{align*}
    Fix $w\in \T(G)$, and let $w=c_1\cdots c_h$ be its  Cartier-Foata decomposition into a unique normal sequence of cliques. Write $c_1=t_1\cdots t_k$, where $(t_{\ell},t_{\ell'})\in E(G)$ for all $\ell\ne \ell'$. Then $l^*(x_{i})x_w\neq 0$ if and only if $i\in \{t_1, \ldots, t_k\}$.
    It follows that
    \begin{align*}
        \langle y,\mathcal{L}y\rangle = \sum_{w\in \T(G)}|\beta_w|^2 k(w),
    \end{align*}
    where $k(w)$ denotes the size of the first clique in the Cartier-Foata decomposition of $w$. Since $k(w)\le \omega(G)$ for any $w\in \T(G)$, we conclude
    \begin{align*}
        \langle y,\mathcal{L}y\rangle \le \omega(G)\norm{y}_2^2,
    \end{align*}
    for any $y\in \T(G)$. Hence $\norm{\mathcal{L}}\le \omega(G)$ and the result follows by \eqref{equation: ov equation}.
\end{proof}
\begin{remark}[Left-representation of right-angled Artin groups]\label{rem: leftrep}
    Theorem \ref{thm: OperatorvaluedCoefficients} can be equally stated for the left-representation of right-angled Artin groups $\G(G)$, that is, the graph product of $\Z$ with respect to the graph $G$. We refer the reader to~\cite{green1990graph} for an introduction to the graph product of groups. Let $\lambda: \G(G) \mapsto B(l^2(\G(G)))$ be the left-representation of $\G(G)$ on $l^2(\G(G))$, then, for any Hilbert space $\mathcal{H}$ and operators $(a_i)_{i\in S}\subset B(\mathcal{H})$, where
    \begin{align*}
        S=[L]\sqcup \{1^*,\ldots, L^*\},
    \end{align*}
    we have
    \begin{align}\label{equation: left-representation}
        \norm{\sum_{i\in S}a_i\otimes \lambda(i)}_{B(\mathcal{H}) \otimes C_{\lambda}^*(\G(G))} \le 2\sqrt{\omega(G)}\max \left\{\norm{\sum_{i\in S}a_ia_i^*}^{1/2}_{B(\mathcal{H})},\norm{\sum_{i\in S}a_i^*a_i}^{1/2}_{B(\mathcal{H})}\right\}.
    \end{align}
    Here, for $i\in [L]$, we define $\lambda(i^*)=\lambda(i^{-1})$. The proof of \eqref{equation: left-representation} follows an argument analogous to that of \cite[Proposition 1.1]{haagerup1993}. As in Theorem \ref{thm: OperatorvaluedCoefficients}, \eqref{equation: left-representation} relies on a normal form for words $w\in \G(G)$ reminiscent of the Cartier-Foata decomposition for trace monoids (see \cite[Theorem 3.9]{green1990graph} and \cite[Section 2.3]{Charney2007}).

\end{remark}

\section{Further discussions}\label{section: further discussions}
In this section, we explore different approaches that yield suboptimal bounds, yet provide connections with various techniques that could potentially be refined and recycled in future extensions. Recall that
\begin{align*}
    T_{G}=\frac{1}{\sqrt{L}}\sum_{i=1}^L s_i.
\end{align*}
Starting from Lemma~\ref{lem: moments of convolution in terms of homomorphisms}, we have the identity
\begin{align*}
    \tau (T_{G}^{2p})=\frac{1}{L^p}\sum_{\pi\in P_2(2p)} \hom(F_\pi,G).
\end{align*}
One approach is to implement a compression or comparison principle, inspired by techniques from random matrix theory \cite{Bandeira_2016},  to compare $\norm{T_{G}}$ with $T_{K_{\chi}}$, where $\chi=\chi(G)$ is the chromatic number of $G$ and $K_{\chi}$ is the complete graph on $\chi$ vertices. The key observation is that  $\hom(F_\pi,G)>0$ implies that $F_\pi$ is $\chi(G)$-colorable. Since $T_{K_{\chi}}$ corresponds to a classical convolution of semicircles, this yields the bound $\norm{T_{G}} \le 2\chi(G)$. While suboptimal, it would be interesting to investigate whether such comparison arguments can be refined to yield sharp bounds.

Another natural bound arises from the constraint $\omega(F_\pi) \le \omega(G)$ whenever $\hom(F_\pi, G)>0$, leading to 
\begin{align*}
    \tau (T_{G}^{2p})\le |P_2^{(\omega(G))}(2p)|,
\end{align*}
where $P_2^{(\omega(G))}(2p)$ denotes the set of $(\omega(G)+1)$-noncrossing pair partitions.  The enumeration of such objects is known \cite{AsymptoticJinReidysWang}, resulting in the bound $\norm{T_{G}} \le 2 \omega(G)$. That this falls short of the sharp bound derived in this work suggests a finer classification of pair partitions: many pairings in $P_2^{(\omega(G))}(2p)$ contribute negligibly to $ \tau (T_{G}^{2p})$, i.e. they satisfy $\hom(F_\pi, G)\ll L^p$. Understanding how the structure of the partition influences $\hom(F_\pi, G)$ is a promising direction for further study. 

We also note that the bound $ \norm{T_{G}} \le 2\sqrt{\chi(G)}$ follows from the monotonicity of the norm with respect to edge addition (see Remark~\ref{remark: monotonicity}). Since $G$ is $\chi(G)$-colorable, there exists a partition $I_1,\ldots, I_{\chi(G)}$ of $[L]$ such that $E(G)\subseteq E(K_{I_1,\ldots,I_{\chi(G)}})$, where $K_{I_1,\ldots, I_d}$ denotes the complete $d$-partite graph over $I_1,\ldots, I_d$. The corresponding norm satisfies
\begin{align*}
    \norm{T_{K_{I_1,\ldots, I_{\chi(G)}}}}=2\sum_{k=1}^{\chi(G)}|I_k|^{1/2},
\end{align*}
from which the bound follows via the Cauchy–Schwarz inequality. 

As mentioned in the introduction, it would be desirable to develop a Turán-type approach to prove Corollary~\ref{cor: turan}. Moreover, it is unclear whether a more refined monotonicity property holds. Specifically, given two graphs  $G,G'$ on $[L]$ with the same clique number and  $|E(G)|\le |E(G')|$, does it follow that $
        \norm{T_{G}}\le \norm{T_{G'}}$?
A positive answer would suggest a constructive route to identifying extremizers and potentially establish uniqueness. More broadly, a better understanding of which graph parameters control $\norm{T_{G}}$, beyond the clique number, remains an open problem of independent interest. 

Finally, we note that Haagerup-type inequalities \cite{Haagerup1978, Jolissaint1990, Buchholz1999, Kemp2007, delaSalle2009}, which generalize Khintchine-type bounds, offer a natural direction for extending our results. Adapting such inequalities to the graph convolution framework introduced here could lead to further structural insights.




\printbibliography
\end{document}